\newtheorem{theorem}{Theorem}[section]
\newtheorem{corollary}[theorem]{Corollary}
\newtheorem{lemma}[theorem]{Lemma}
\newtheorem{fact}[theorem]{Fact}
\newtheorem{definition}[theorem]{Definition}
\theoremstyle{remark}
\newtheorem{remark}[theorem]{Remark}
\numberwithin{equation}{section}
\DeclareMathOperator*{\diam}{diam}
\DeclareMathOperator*{\E}{\mathbb{E}}
\DeclareMathOperator*{\sign}{sign}
\DeclareMathOperator*{\supp}{supp}
\def \R {\mathbb{R}}
\def \Z {\mathbb{Z}}
\def \P {\mathbb{P}}
\def \one {{\bf 1}}
\def \EE {\mathcal{E}}
\def \FF {\mathcal{F}}
\def \NN {\mathcal{N}}
\def \e {\varepsilon}
\def \l {\lambda}
\def \< {\langle}
\def \> {\rangle}
\def \^ {\widehat}
\newcommand{\norm}[1]{\left \|#1\right \|}
\newcommand{\twonorm}[1]{\norm{#1}_2}
\newcommand{\fronorm}[1]{\norm{#1}_F}
\newcommand{\abs}[1]{\left | #1 \right |}
\renewcommand{\Pr}[1]{\P \left\{ #1 \rule{0mm}{3mm}\right\}}
\title{Dimension reduction by random hyperplane tessellations}
\author{Yaniv Plan}
\author{Roman Vershynin}
\date{\today}
\address{Department of Mathematics,
  University of Michigan,
  530 Church St.,
  Ann Arbor, MI 48109, U.S.A.}
\email{\{yplan,romanv\}@umich.edu}
\subjclass[2000]{60D05, 46B09, 68Q87}
\thanks{Y.P. is supported by an NSF Postdoctoral Research Fellowship under award No. 1103909. 
  R.V. is supported by NSF grants DMS 0918623 and 1001829.}
\begin{document}

\begin{abstract}

  Given a subset $K$ of the unit Euclidean sphere, we estimate the minimal number $m = m(K)$ of hyperplanes
  that generate a uniform tessellation of $K$, in the sense that  
  the fraction of the hyperplanes separating any pair $x,y \in K$ 
  is nearly proportional to the Euclidean distance between $x$ and $y$.
  Random hyperplanes prove to be almost ideal for this problem; they achieve the almost
  optimal bound $m = O(w(K)^2)$ where $w(K)$ is the Gaussian mean width of $K$. 
  Using the map that sends $x \in K$ to the sign vector with respect to the hyperplanes, 
  we conclude that every bounded subset $K$ of $\R^n$ embeds into the 
  Hamming cube $\{-1,1\}^m$ with a small distortion 
  in the Gromov-Haussdorff metric. Since for many sets $K$ one has $m = m(K) \ll n$, 
  this yields a new discrete mechanism of dimension reduction for sets in Euclidean spaces.
\end{abstract}

\maketitle

\textit{Keywords:}  Embedding; Dimension reduction; Hyperplane tessellations; Mean width; Near isometry
\section{Introduction}

Consider a bounded subset $K$ of $\R^n$. We would like to find an arrangement of $m$ affine hyperplanes in $\R^n$ 
that cut through $K$ as evenly as possible; see Figure~\ref{fig:tessellation} for an illustration. 
The intuitive notion of an ``even cut'' can be expressed more formally in the following way:
The fraction of the hyperplanes separating any pair $x,y \in K$ 
should be proportional (up to a small additive error) to the Euclidean distance between $x$ and $y$. 
What is the smallest possible number $m=m(K)$ of hyperplanes with this property? 
Besides having a natural theoretical appeal, this question is directly motivated by a certain problem of information theory 
which we will describe later.

\begin{figure}[htp]		
  \centering \includegraphics[height=2.7cm]{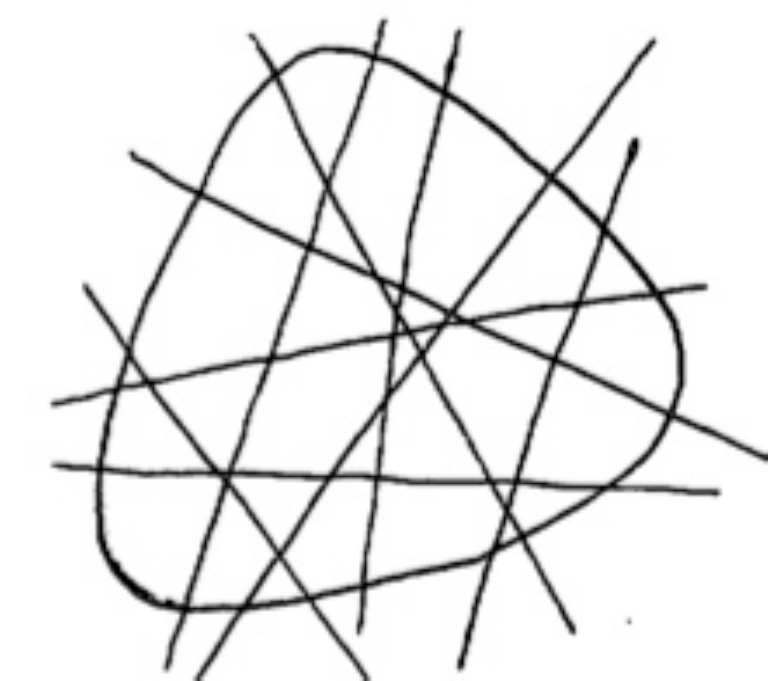} 
  \caption{A hyperplane tessellation of a set in the plane}
  \label{fig:tessellation}
\end{figure}

In the beginning it will be most convenient to work with subsets $K$ of the unit Euclidean sphere $S^{n-1}$, but we will lift this restriction later. 
Let $d(x,y)$ denote the normalized geodesic distance on $S^{n-1}$, so the distance between the opposite points
on the sphere equals $1$. 
A (linear) hyperplane in $\R^n$ can be expressed as $a^\perp$ for some $a \in \R^n$. We say that points $x,y \in \R^n$
are separated by the hyperplane\footnote{For convenience of presentation we prefer the sign function to take values
$\{-1,1\}$, so we define it as $\sign(t)=1$ for $t \ge 0$ and $\sign(t) = -1$ for $t<0$.}
if $\sign\< a,x\> \ne \sign\< a,y\> $.

\begin{definition}[Uniform tessellation]	
  Consider a subset $K \subseteq S^{n-1}$ 
  and an arrangement of $m$ hyperplanes in $\R^n$.
  Let $d_A(x,y)$ denote the fraction of the hyperplanes that separate points $x$ and $y$ in $\R^n$.
  Given $\delta>0$, we say that the hyperplanes provide a {\em $\delta$-uniform tessellation} of $K$ 
  if 
  \begin{equation}							\label{eq:uniform tessellation}
  | d_A(x,y) - d(x,y) | \le \delta, \quad x,y \in K.
  \end{equation}
\end{definition}

The main result of this paper is a bound on the minimal number $m = m(K,\delta)$ of hyperplanes that provide a uniform tessellation of a set $K$.
It turns out that for a fixed accuracy $\delta$, 
an almost optimal estimate on $m$ depends only on one global parameter of $K$, namely the mean width.
Recall that the {\em Gaussian mean width} of $K$ is defined as 
\begin{equation}							\label{eq:mean width}
w(K) = \E \sup_{x \in K} |\< g,x \> |
\end{equation}
where $g \sim \NN(0, I_n)$ is a standard Gaussian random vector in $\R^n$. 

\begin{theorem}[Random uniform tessellations] 				\label{thm:tessellations}
  Consider a subset $K \subseteq S^{n-1}$ and let $\delta > 0$. 
  Let 
  $$
  m \ge C \delta^{-6} w(K)^2
  $$ 
  and consider an arrangement of $m$ independent random hyperplanes in $\R^n$ 
  uniformly distributed according to the Haar measure. 
  Then with probability at least $1-2\exp(-c \delta^2 m)$, these hyperplanes provide a $\delta$-uniform tessellation of $K$. 
  Here and later $C, c$ denote positive absolute constants. 
\end{theorem}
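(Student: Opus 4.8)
The plan is to separate the two inequalities hidden in the bound $|d_A(x,y)-d(x,y)|\le\delta$ and handle them by different mechanisms. For a single random hyperplane $a^\perp$ with $a\sim\NN(0,I_n)$, the classical Grothendieck/Goemans--Williamson identity gives $\P\{\sign\langle a,x\rangle\ne\sign\langle a,y\rangle\}=\frac{1}{\pi}\arccos\langle x,y\rangle=d(x,y)$ (with the present normalization of $d$). So for each fixed pair $x,y$, $d_A(x,y)$ is an average of $m$ i.i.d.\ Bernoulli's with mean exactly $d(x,y)$, and Hoeffding's inequality gives $\P\{|d_A(x,y)-d(x,y)|>\delta\}\le 2\exp(-2\delta^2 m)$. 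The whole difficulty is turning this pointwise statement into a uniform one over the (typically infinite) set $K$. A naive union bound needs a net; but the map $x\mapsto(\sign\langle a_i,x\rangle)_{i\le m}$ is discontinuous, so a net argument controls $d_A$ only up to an error governed by how often a hyperplane passes very close to a net point --- and that error does not obviously shrink.

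The key idea I would use is to pass to a soft, Lipschitz surrogate for the sign map before discretizing. Fix a small smoothing scale $t>0$ and replace $\sign(s)$ by the ramp function $\psi_t(s)=\sign(s)\min(|s|/t,1)$, which is $(1/t)$-Lipschitz. Define the soft separation fraction $\tilde d_A(x,y)=\frac{1}{2m}\sum_{i\le m}|\psi_t(\langle a_i,x\rangle)-\psi_t(\langle a_i,y\rangle)|$. Three comparisons then need to be made: (i) $|\tilde d_A(x,y)-d_A(x,y)|$ is small with high probability, because the two disagree only on coordinates $i$ with $\min(|\langle a_i,x\rangle|,|\langle a_i,y\rangle|)<t$, an event of probability $O(t)$ for each $i$ --- so on average $O(t)$ fraction of coordinates, and Bernstein/Chernoff gives concentration; (ii) $\E\,\tilde d_A(x,y)$ is within $O(t)$ of $d(x,y)$, by a direct one-dimensional computation; (iii) $\tilde d_A(x,y)$ concentrates uniformly over $K\times K$, which is where the Lipschitz property pays off.

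For step (iii), I would view $\tilde d_A$ as a function on $K$ through the random embedding $\Phi(x)=\frac{1}{\sqrt m}(\psi_t(\langle a_i,x\rangle))_{i\le m}\in\R^m$ and bound $\sup_{x,y\in K}\big|\,\|\Phi(x)-\Phi(y)\|_1/\text{(something)} - \E(\cdots)\big|$. The engine is a Gaussian concentration / empirical-process bound: the class of functions $a\mapsto|\psi_t(\langle a,x\rangle)-\psi_t(\langle a,y\rangle)|$ indexed by $x,y\in K$ has a sub-Gaussian increment structure whose metric complexity is controlled, via the Lipschitz constant $1/t$ and a chaining/Dudley bound, by the Gaussian mean width $w(K)$. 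Concretely one expects a deviation of order $\frac{1}{t}\cdot\frac{w(K)}{\sqrt m}$ plus a $\sqrt{\log(1/\varepsilon)/m}$ term from a union bound over a net; choosing $t\sim\delta$ and then $m\gtrsim \delta^{-2}(w(K)/t)^2=\delta^{-4}w(K)^2$ or so would make everything $\le\delta$. Tracking the smoothing error through all three comparisons, and in particular the fact that the soft-to-hard discrepancy in (i) forces $t$ to be taken a further power of $\delta$ smaller, is what inflates the exponent to the stated $\delta^{-6}$; I would not try to optimize it.

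The main obstacle, and the step I would spend the most care on, is step (i) combined with the uniformity in (iii): one must control, \emph{uniformly over all pairs in $K$}, the number of coordinates $i$ for which some $\langle a_i,x\rangle$ lands in the bad strip $(-t,t)$, and simultaneously show this cannot be exploited adversarially by choosing $x,y$ after seeing the $a_i$'s. This is exactly a bound on the maximal number of near-incidences between the random hyperplanes and points of $K$, and it again reduces to a mean-width-type complexity estimate for the indicator class $\{a\mapsto\one_{|\langle a,x\rangle|<t}:x\in K\}$ --- a class that is \emph{not} Lipschitz, so one has to either re-smooth it or invoke a VC-type / local-complexity argument. Getting a clean high-probability bound here, with the $\exp(-c\delta^2 m)$ tail matching the statement, is the crux; everything else is bookkeeping of one-dimensional Gaussian integrals and standard concentration.
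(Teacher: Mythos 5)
Your proposal and the paper share the same high-level scaffolding (Gaussian identity $\P\{\sign\langle a,x\rangle\neq\sign\langle a,y\rangle\}=d(x,y)$, Chernoff for fixed pairs, Sudakov net on $K$, and Lemma~\ref{lem:concentration} as the $\ell_1$ engine), but the smoothing device is genuinely different, and that difference is exactly where your plan has a gap.

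The paper does not replace $\sign$ by a Lipschitz ramp. It defines a family of \emph{thresholded} Hamming distances $d_A^t(x,y)=\frac1m\#\{i:\langle a_i,x\rangle>t,\ \langle a_i,y\rangle<-t\ \text{or vice versa}\}$. This family is monotone decreasing in $t$, with $d_A^0=d_A$, so one gets for free the sandwich $d_A^{|t|}\le d_A\le d_A^{-|t|}$. The crucial step --- the analogue of your step (i)+(iii) combination --- is Lemma~\ref{lem:continuity L1}: if $\|Ax'\|_1\le\varepsilon m$ and $\|Ay'\|_1\le\varepsilon m$, then for any $M\ge1$,
\[
d_A^{t+M\varepsilon}(x_0,y_0)-\tfrac2M\ \le\ d_A^t(x_0+x',y_0+y')\ \le\ d_A^{t-M\varepsilon}(x_0,y_0)+\tfrac2M.
\]
The proof is one line of Markov: at most $2m/M$ indices $i$ have $|\langle a_i,x'\rangle|>M\varepsilon$ or $|\langle a_i,y'\rangle|>M\varepsilon$, and on the remaining indices the threshold shift $t\mapsto t\pm M\varepsilon$ absorbs the perturbation exactly. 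Combined with the $\ell_1$ tail bound ($\|Ax'\|_1\le\varepsilon m$ uniformly over $x'\in(K-K)\cap\varepsilon B_2^n$, which is Lemma~\ref{lem:concentration} applied to the set of tails) and a union bound over the net with two threshold values $t\pm M\varepsilon$, the argument closes without ever having to control the ``bad strip'' $\{i:|\langle a_i,x\rangle|<t\}$ uniformly over $x\in K$.

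This is precisely the step your plan leaves open. You replace $\sign$ by the $(1/t)$-Lipschitz ramp $\psi_t$ and correctly observe that the discrepancy $|\tilde d_A-d_A|$ lives on bad-strip coordinates; but $\tilde d_A$ does not sandwich $d_A$ (one can have $\tilde d_A > d_A$ or $\tilde d_A < d_A$ depending on which side of the strip the near-zero coordinate falls), so you cannot discard those coordinates as a one-sided error. You are thus forced to bound $\sup_{x\in K}\frac1m\#\{i:|\langle a_i,x\rangle|<t\}$ directly, and you flag --- correctly --- that this indicator class is not Lipschitz and that VC bounds give complexity $\sim n$ rather than $\sim w(K)^2$. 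The ``re-smooth it'' hint you give can be made to work (dominate $\one_{|s|<t}$ by a $(1/t)$-Lipschitz bump supported on $|s|<2t$, chain using the same $\ell_1$ tail bound, choose $t\sim\delta$, $\varepsilon\sim\delta^2$; this gives the same $m\gtrsim\delta^{-6}w(K)^2$), but if you unwind it you will find that the Lipschitz-chaining step is again $\|A(x-x_0)\|_1/(tm)\le\varepsilon/t$ followed by a threshold enlargement from $t$ to $2t$, i.e., you have reinvented the threshold-shift-plus-Markov mechanism of Lemma~\ref{lem:continuity L1}. So the route is viable, but the missing idea --- use the monotone threshold family so that the $\ell_1$ perturbation is absorbed by a threshold shift and a $2/M$ counting loss, rather than trying to bound the bad strip as a separate supremum --- is exactly the paper's contribution, and as written your proposal stops short of it at the step you yourself identify as the crux.
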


\begin{remark}[Tessellations in stochastic geometry]
  By the rotation invariance of the Haar measure, it easily follows that 
  $\E d_A(x,y) = d(x,y)$ for each pair $x,y \in \R^n$.
  Theorem~\ref{thm:tessellations} states that with high probability, $d_A(x,y)$ almost matches
  its expected value {\em uniformly} over all $x,y \in K$.  
  This observation highlights the principal difference between the problems studied in this paper 
  and the classical problems on random hyperplane tessellations studied in stochastic geometry. 
  The classical problems concern the shape of a {\em specific} cell (usually the one containing the origin)
  or certain {\em statistics} of cells (e.g. ``how many cells have volume greater than a fixed number''?), see \cite{Calka}.
  In contrast to this, the concept of uniform tessellation we propose his paper concerns {\em all} cells simultaneously; 
  see Section~\ref{s:cells} for a vivid illustration. 
\end{remark}

\subsection{Embeddings into the Hamming cube}

Theorem~\ref{thm:tessellations} has an equivalent formulation in the context of metric embeddings. It yields that 
{\em every subset $K \subseteq S^{n-1}$ can be almost isometrically embedded into the Hamming cube $\{-1,1\}^m$ with 
$m = O(w(K)^2)$}. 

\smallskip

To explain this statement, let us recall a few standard notions. 
An $\e$-isometry (or almost isometry) between metric spaces $(X,d_X)$ and $(Y, d_Y)$
is a map $f : X \to Y$ which satisfies 
$$
| d_Y(f(x),f(x')) - d_X(x,x') | \le \e, \quad x,x' \in X,
$$
and such that for every $y \in Y$ one can find $x \in X$ satisfying $d_Y(y,f(x)) \le \e$.
A map $f : X \to Y$ is an $\e$-isometric embedding of $X$ into $Y$ if the map $f : X \to f(X)$ is an $\e$-isometry
between $(X,d_X)$ and the subspace $(f(X),d_Y)$. 
It is not hard to show that $X$ can be $2 \e$-isometrically embedded into $Y$ (by means of a suitable map $f$) if 
$X$ has the Gromov-Haussdorff distance at most $\e$ from some subset of $Y$.  Conversely, if there is an $\e$-isometry between $X$ and $f(X)$ then the Gromov-Haussdorff distance between $X$ and $f(X)$ is bounded by $\e$.

Finally, recall that the Hamming cube is the set $\{-1,1\}^m$ with the (normalized) Hamming distance
$d_{H}(u,v) = \frac{1}{m} \sum_{i=1}^m \mathbb{1}_{\{u_i \neq v_i\}} = $ the fraction of the coordinates where $u$ and $v$ are different. 

\smallskip

An arrangement of $m$ hyperplanes in $\R^n$ defines a {\em sign map} 
$f : \R^n \to \{-1,1\}^m$ which sends $x \in \R^n$ to the sign vector of the orientations 
of $x$ with respect to the hyperplanes. The sign map is uniquely defined up to the isometries of the Hamming cube.
Let $a_1,\ldots, a_m \in \R^n$ be normals of the hyperplanes, 
and consider the $m \times n$ matrix $A$ with rows $a_i$.
The sign map can be expressed as 
$$
f(x) = \sign Ax, \quad f: \R^n \to \{-1,1\}^m,
$$
where $\sign Ax$ denotes the vector of signs of the coordinates $\< a_i, x\> $ of $Ax$.
The fraction $d_A(x,y)$ of the hyperplanes that separate points $x$ and $y$ thus equals
$$
d_A(x,y) = d_H(\sign Ax, \sign Ay), \quad x,y \in \R^n.
$$
Then looking back at the definition of uniform tessellations, we observe the following fact:

\begin{fact}[Embeddings by uniform tessellations]				\label{fact:embeddings}
  Consider a $\delta$-uniform tessellation of a set $K \subseteq S^{n-1}$ by $m$ hyperplanes. 
  Then the set $K$ (with the induced geodesic distance) 
  can be $\delta$-isometrically embedded into the Hamming cube $\{-1,1\}^m$. 
  The sign map provides such an embedding. \qed
\end{fact}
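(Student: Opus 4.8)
The plan is to take the sign map itself as the embedding and simply assemble the identities already recorded above. I would set $f(x) = \sign Ax$, where $A$ is the $m \times n$ matrix whose rows $a_1,\dots,a_m$ are the normals of the $m$ hyperplanes, so that $f : \R^n \to \{-1,1\}^m$ and in particular $f : K \to \{-1,1\}^m$. The one ingredient that has already been observed is the identity $d_A(x,y) = d_H(\sign Ax, \sign Ay) = d_H(f(x),f(y))$, valid for all $x,y \in \R^n$, which converts the tessellation hypothesis, phrased in terms of the separating fraction $d_A$, into a statement about the Hamming metric on the image.

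Next I would substitute this identity into the defining inequality \eqref{eq:uniform tessellation} of a $\delta$-uniform tessellation: since $|d_A(x,y) - d(x,y)| \le \delta$ for all $x,y \in K$, we obtain $|d_H(f(x),f(y)) - d(x,y)| \le \delta$ for all $x,y \in K$. This is exactly the metric-distortion clause in the definition of a $\delta$-isometry between $(K,d)$, where $d$ is the induced geodesic distance, and the subspace $(f(K), d_H)$.

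It then remains only to verify the second clause in the definition of a $\delta$-isometry, namely that every point of the target space is within $\delta$ of the image of the map. Here the relevant target is $f(K)$ itself (this is what "$\delta$-isometric embedding" requires), so for any $y \in f(K)$ we may pick $x \in K$ with $y = f(x)$, giving $d_H(y, f(x)) = 0 \le \delta$ trivially. Hence $f : K \to f(K)$ is a $\delta$-isometry, i.e. $f$ is a $\delta$-isometric embedding of $K$ into $\{-1,1\}^m$, as claimed. I do not expect any genuine obstacle: the entire content sits in the definitions of uniform tessellation and of $\delta$-isometric embedding together with the earlier identity relating $d_A$ to $d_H$, and the argument is pure bookkeeping — which is presumably why the statement is recorded as a Fact.
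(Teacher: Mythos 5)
Your argument is correct and is exactly what the paper intends: the Fact is stated with an immediate \qed because it follows directly from the identity $d_A(x,y) = d_H(\sign Ax, \sign Ay)$ together with the definitions of $\delta$-uniform tessellation and $\delta$-isometric embedding, which is precisely the bookkeeping you carried out. The surjectivity-onto-$f(K)$ clause is trivial just as you note, so there is nothing to add.
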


This allows us to state Theorem~\ref{thm:tessellations} as follows:

\begin{theorem}[Embeddings into the Hamming cube] 				\label{thm:embeddings}
  Consider a subset $K \subseteq S^{n-1}$ and let $\delta > 0$. 
  Let 
  $$
  m \ge C \delta^{-6} w(K)^2.
  $$ 
  Then $K$ can be $\delta$-isometrically embedded into the Hamming cube $\{-1,1\}^m$.
  
  Moreover, let $A$ be an $m \times n$ random matrix with independent $\NN(0,1)$ entries. 
  Then with probability at least $1-2\exp(-c \delta^2 m)$, the sign map 
  \begin{equation}							\label{eq:f}
  f(x) = \sign Ax, \quad f : K \to \{-1,1\}^m
  \end{equation}
  is an $\delta$-isometric embedding. \qed
\end{theorem}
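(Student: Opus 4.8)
The plan is to derive Theorem~\ref{thm:embeddings} from Theorem~\ref{thm:tessellations} and then to prove the latter. For the reduction: the rows of a matrix $A$ with i.i.d.\ $\NN(0,1)$ entries have rotation--invariant, hence Haar--distributed, directions, and $\sign Ax$ depends only on these directions, so $A$ induces $m$ independent Haar--random hyperplanes; on the event that they form a $\delta$-uniform tessellation of $K$ one has $|d_H(\sign Ax,\sign Ax') - d(x,x')| = |d_A(x,x') - d(x,x')| \le \delta$ for all $x,x'\in K$, so by Fact~\ref{fact:embeddings} the sign map $f=\sign A(\cdot)$ is a $\delta$-isometric embedding of $K$ into $\{-1,1\}^m$. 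This is the ``moreover'' part, and since the event has positive probability it also yields the first, existential, assertion. It therefore suffices to prove Theorem~\ref{thm:tessellations}.

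For a \emph{fixed} pair $x,y\in K$, the fraction $d_A(x,y)=\frac1m\sum_{i=1}^m\mathbb{1}\{\sign\langle a_i,x\rangle\neq\sign\langle a_i,y\rangle\}$ is an average of i.i.d.\ Bernoulli variables with mean $d(x,y)$ (by the rotation--invariance computation in the Remark), so Hoeffding's inequality gives $\Pr{|d_A(x,y)-d(x,y)|>t}\le 2e^{-2t^2m}$. The difficulty is making this uniform over $K$: the map $(x,y)\mapsto d_A(x,y)$ is discontinuous, so it cannot be controlled on a net and extended by continuity. I would instead split the task into a net estimate and a separate bound for very close pairs. Fix a $\delta^2$-net $\mathcal N$ of $K$ (in Euclidean, hence comparably in geodesic, distance); Sudakov's inequality gives $\log|\mathcal N|\lesssim w(K)^2/\delta^4$, so when $m\ge C\delta^{-6}w(K)^2$ a union bound over the at most $|\mathcal N|^2$ pairs from $\mathcal N$ yields, with probability $\ge 1-2e^{-c\delta^2m}$, that $|d_A(x',y')-d(x',y')|\le\delta/4$ for all $x',y'\in\mathcal N$. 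For an arbitrary pair $x,y\in K$, choose net points $x',y'$ with $d(x,x'),d(y,y')\lesssim\delta^2$; since $d_A$ satisfies the triangle inequality, $|d_A(x,y)-d_A(x',y')|\le d_A(x,x')+d_A(y,y')$, while $|d(x,y)-d(x',y')|\lesssim\delta^2$. Thus the theorem reduces to showing that, with high probability, $d_A(u,v)\le C\delta$ for every pair $u,v\in K$ with $\twonorm{u-v}\lesssim\delta^2$. (The exponent $\delta^{-6}$ comes precisely from combining the $\delta^{-2}$ cost of Hoeffding with the $\delta^{-4}$ bound on $\log|\mathcal N|$ at net scale $\delta^2$, and that fine a scale is forced by the close--pair bound below.)

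Proving uniform smallness of the separating fraction for arbitrarily close pairs is the main obstacle, and I would handle it via two ``slab--counting'' estimates. If $a^\perp$ separates $u$ and $v$ then $|\langle a,u\rangle|\le|\langle a,u-v\rangle|$, so the number of separating hyperplanes is at most $\#\{i:|\langle a_i,u\rangle|\le\delta\}+\#\{i:|\langle a_i,u-v\rangle|>\delta\}$. I would bound the first count uniformly over $u\in K$ and the second uniformly over $u-v\in(K-K)\cap\delta^2 B_2^n$, by $C\delta m$ each, as follows. Replace the indicator by a bounded Lipschitz surrogate $\psi$ (with $\mathbb{1}_{[-\delta,\delta]}\le\psi\le\mathbb{1}_{[-2\delta,2\delta]}$ in the first case, and $\mathbb{1}_{\{|t|>\delta\}}\le\psi\le\mathbb{1}_{\{|t|>\delta/2\}}$ in the second), both $O(\delta^{-1})$-Lipschitz and valued in $[0,1]$; writing $F(z)=\sum_i\psi(\langle a_i,z\rangle)$ and letting $z$ range over the relevant index set, symmetrization and the contraction principle give $\E\sup_z|F(z)-\E F(z)|\lesssim\delta^{-1}\E\sup_z|\langle\sum_i\varepsilon_ia_i,z\rangle|=\delta^{-1}\sqrt m\,w(K)$ (using $w(K-K)\le2w(K)$), which is $\le\delta m$ as soon as $m\gtrsim w(K)^2/\delta^4$; and $\E F(z)$ itself is $\lesssim\delta m$ in the first case and exponentially small in the second (since $\twonorm{u-v}\lesssim\delta^2$ makes $|\langle a_i,u-v\rangle|>\delta$ a roughly $\delta^{-1}$-standard--deviation event). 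Because $\psi$ is $[0,1]$-valued, changing one row $a_i$ changes $\sup_zF(z)$ by at most $1$, so McDiarmid's bounded--difference inequality upgrades each of these expectation bounds to concentration of the required strength $1-2e^{-c\delta^2m}$. For any close pair $u,v\in K$ this yields $m\,d_A(u,v)\le C\delta m + C\delta m$, i.e.\ $d_A(u,v)\le 2C\delta$; taking a union bound over the net estimate and the two slab estimates, and rescaling $\delta$ by an absolute constant, completes the proof.
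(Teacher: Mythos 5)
Your proof is correct, and it takes a genuinely different route from the paper while relying on the same underlying quantities (Sudakov for $\log|\mathcal N|$, symmetrization/contraction for the uniform fluctuation, $w(K)$ as the effective dimension). The paper's mechanism for passing from the net to all of $K$ is the \emph{soft Hamming distance} $d_A^t$: it proves that $\frac1m\sum_i|\langle a_i,x'\rangle|\le\e$ uniformly over tails $x'\in(K-K)\cap\e B_2^n$ (Lemma~\ref{lem:tails}, an application of the $\ell_1$-embedding Lemma~\ref{lem:concentration}), and then feeds this $L_1$ bound into a deterministic continuity lemma (Lemma~\ref{lem:continuity L1}) which says $d_A^{t+M\e}(x_0,y_0)-2/M\le d_A^t(x,y)\le d_A^{t-M\e}(x_0,y_0)+2/M$; the soft distance at shifted levels absorbs the discontinuity. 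You instead work with the hard $d_A$ throughout, exploit its triangle inequality to reduce to a close-pair bound, and then bound $m\,d_A(u,v)\le\#\{i:|\langle a_i,u\rangle|\le\delta\}+\#\{i:|\langle a_i,u-v\rangle|>\delta\}$ via the observation that a separating hyperplane satisfies $|\langle a,u\rangle|\le|\langle a,u-v\rangle|$, controlling each count uniformly with Lipschitz surrogates, symmetrization/contraction, and McDiarmid. These two routes are close in spirit---your first slab count is essentially what the band between $d_A^{t-M\e}$ and $d_A^{t+M\e}$ measures, and your second count is what the paper extracts from the $L_1$ tail bound by Markov---but the packaging is different: the paper cleanly separates a probabilistic $\ell_1$ estimate from a deterministic continuity statement, whereas you fold both into two sup-of-counts concentration estimates. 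Your version is more self-contained in that it never invokes the soft distance or the $\ell_1$-embedding corollary explicitly, at the cost of redoing symmetrization and a bounded-differences argument for each count; both versions land at the same $\delta^{-6}$ because the bottleneck is identical (net scale $\e\sim\delta^2$, so $\log|\mathcal N|\sim\delta^{-4}w(K)^2$, combined with the $\delta^{-2}$ from Hoeffding). One small gap to flag: the contraction principle requires the contraction to vanish at $0$, and your first surrogate has $\psi(0)=1$; you should subtract $\psi(0)$ before contracting, which produces an extra $|\psi(0)|\,\E|\sum_i\varepsilon_i|\lesssim\sqrt m$ term---harmless here since $\sqrt m\le\delta m$ under your hypothesis on $m$, but worth stating.
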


\subsection{Almost isometry of $K$ and the tessellation graph.}

The image of the sign map $f$ in \eqref{eq:f} has a special meaning.
When the Hamming cube $\{-1,1\}^m$ is viewed as a graph (in which two points $u$, $v$ 
are connected if they differ in exactly one coordinate), the image of $f$ 
defines a subgraph of $\{-1,1\}^m$, which is called the {\em tessellation graph} of $K$. 
The tessellation graph has a vertex for each cell and an edge for each pair of adjacent cells, 
see Figure~\ref{fig:tessellation-graph}. Notice that the graph distance in the tessellation graph 
equals the number of hyperplanes that separate the two cells. Therefore the
definition of a uniform tessellation yields:

\begin{fact}[Graphs of uniform tessellations]					\label{fact:graph}
  Consider a $\delta$-uniform tessellation of a set $K \subseteq S^{n-1}$. 
  Then $K$ is $\delta$-isometric to the tessellation graph of $K$. \qed
\end{fact}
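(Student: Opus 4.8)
The plan is to write down an explicit $\delta$-isometry; the statement is really just the definition of a uniform tessellation re-read in the language of graphs. Write $G_K$ for the tessellation graph of $K$: its vertex set is $f(K) = \{\sign Ax : x \in K\} \subseteq \{-1,1\}^m$, two vertices are joined when the corresponding cells share a facet, and we equip $G_K$ with the graph distance $d_{G_K}$ normalized so that each edge has length $1/m$ (so that crossing one hyperplane costs $1/m$, matching the scaling of both $d$ and $d_A$). I would take as the candidate map the sign map $f(x) = \sign Ax$ itself, now regarded as a surjection $f \colon K \to V(G_K)$ onto $V(G_K) = f(K)$; it is well defined everywhere on $K$ since the convention $\sign 0 = 1$ makes $\sign Ax$ single-valued. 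Surjectivity supplies the second clause in the definition of a $\delta$-isometry for free: every vertex is attained exactly, hence within $\delta$.

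The only thing left to check is the metric comparison: for $x, y \in K$ the normalized graph distance $d_{G_K}(f(x), f(y))$ should equal $d_A(x,y) = d_H(\sign Ax, \sign Ay)$, the fraction of hyperplanes separating $x$ and $y$. Granting this, the uniform tessellation hypothesis \eqref{eq:uniform tessellation} gives
$$
\bigl|\, d_{G_K}(f(x), f(y)) - d(x,y) \,\bigr| = \bigl| d_A(x,y) - d(x,y) \bigr| \le \delta ,
$$
which together with the surjectivity above is precisely the assertion that $f$ is a $\delta$-isometry, and hence that $(K,d)$ is $\delta$-isometric to $G_K$.

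It remains to discuss the metric comparison, which is the general-position fact about hyperplane arrangements already noted just above the statement. One inequality is easy: every edge of $G_K$, like every edge of the cube, flips exactly one sign coordinate, so a path of normalized length $\ell$ changes the sign vector in at most $m\ell$ coordinates; since the sign vectors of $f(x)$ and $f(y)$ differ in $m\,d_A(x,y)$ coordinates, any path between them has normalized length at least $d_A(x,y)$. For the reverse inequality I would travel along the segment $[x,y]$: it meets exactly the hyperplanes that separate $x$ and $y$, and after an arbitrarily small perturbation it meets them one at a time and otherwise stays in cell interiors, so the cells it visits form a path from $f(x)$ to $f(y)$ of normalized length exactly $d_A(x,y)$. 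The one point requiring care -- and the reason this can be stated as a \qed-fact rather than a full theorem -- is that the intermediate cells on this segment need not intersect $K$, since $K \subseteq S^{n-1}$ is far from convex; one therefore either reads $G_K$ as the relevant subgraph of the arrangement graph of all of $\R^n$, or else simply invokes Fact~\ref{fact:embeddings}, identifying $G_K$ with $(f(K), d_H)$, on which $d_{G_K}$ and $d_H$ coincide by the argument just given.
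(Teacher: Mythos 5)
Your proof is correct and takes essentially the same route the paper intends: the fact is Fact~\ref{fact:embeddings} repackaged, with the tessellation graph identified (up to the $1/m$ normalization) with $(f(K),d_H)$, so the metric comparison is exactly the uniform-tessellation hypothesis \eqref{eq:uniform tessellation}. You are a bit more careful than the paper about the assertion that tessellation-graph distance equals Hamming distance, correctly flagging that the literal cell-adjacency graph on cells meeting $K$ need not have this property since $K\subseteq S^{n-1}$ is far from convex; the paper silently elides this, and your fix (read the metric on $f(K)$ as the Hamming metric, or equivalently work in the adjacency graph of the full arrangement) is the intended reading.
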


Hence we can read the conclusion of Theorem~\ref{thm:tessellations} as follows:
{\em $K$ is $\delta$-isometric to the graph of its tessellation by $m$ random hyperplanes, where $m \sim \delta^{-6} w(K)^2$}.
\begin{figure}[htp]		
  \centering \includegraphics[height=2.7cm]{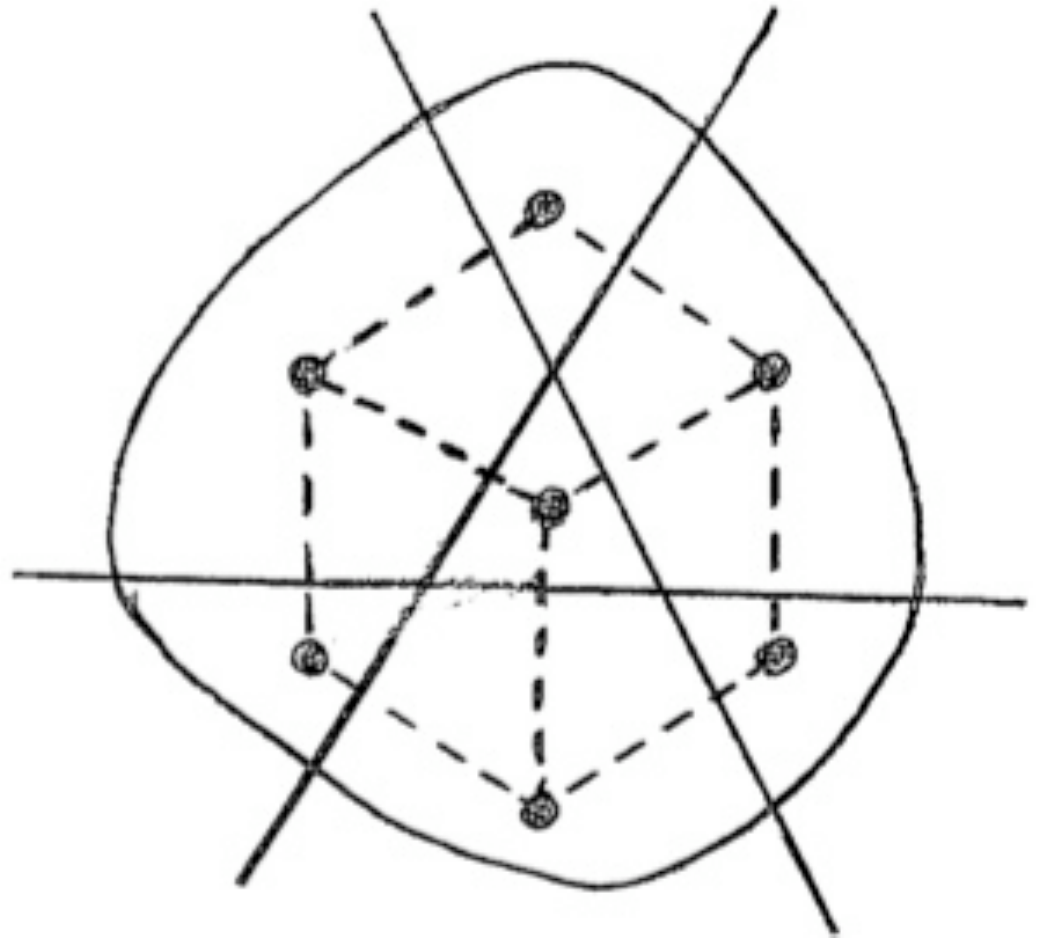} 
  \caption{The graph of a tessellation of a set in the plane. The dashed lines represent the edges.}
  \label{fig:tessellation-graph}
\end{figure}

\subsection{Computing mean width}

Powerful methods to estimate the mean width $w(K)$ have been developed in connection 
with stochastic processses. These methods include Sudakov's and Dudley's inequalities 
which relate $w(K)$ to the covering numbers of $K$ in 
the Euclidean metric, and the sharp technique of majorizing measures (see \cite{LT, T Chaining}).

Mean width has a simple (and known) geometric interpretation.
By the rotational invariance of the Gaussian random vector $g$ in \eqref{eq:mean width}, 
one can replace $g$ with a random vector $\theta$ that is uniformly distributed on $S^{n-1}$, as follows:
$$
w(K) = c_n \sqrt{n} \cdot \bar{w}(K), \quad \text{where} \quad \bar{w}(K) = \E \sup_{x \in K} |\< \theta,x\> |.
$$
Here $c_n$ are numbers that depend only on $n$ and such that $c_n \le 1$ and  
$\lim_{n \to \infty} c_n = 1$.  
We may refer to $\bar{w}(K)$ as the {\em spherical mean width} of $K$. Let us assume for simplicity that $K$ is symmetric with respect to the origin. Then $2\sup_{x \in K} |\< \theta,x\> |$ is the width of $K$ in the direction $\theta$, which is the distance 
between the two supporting hyperplanes of $K$ whose normals are $\theta$. 
The spherical mean width $\bar{w}(K)$ is then twice the average width of $K$ over all directions.

\subsection{Dimension reduction}

Our results are already non-trivial in the particular case $K = S^{n-1}$. 
Since $w(S^{n-1}) \le \sqrt{n}$, Theorems~\ref{thm:tessellations} and \ref{thm:embeddings} hold with 
$m \sim n$.  
But more importantly, many interesting sets $K \subset S^{n-1}$ satisfy $w(K) \ll \sqrt{n}$ 
and therefore make our results hold with $m \sim w(K)^2 \ll n$. 
In such cases, one can view the sign map $f(x) = \sign Ax$ in Theorem~\ref{thm:embeddings} as a dimension 
reduction mechanism that transforms an $n$-dimensional set $K$ into a subset of $\{-1,1\}^m$.

A heuristic reason why dimension reduction is possible is that the quantity $w(K)^2$ measures 
the {\em effective dimension} of a set $K \subseteq S^{n-1}$. 
  The
 effective dimension $w(K)^2$ of a set $K \subseteq S^{n-1}$
is always bounded by the algebraic dimension, but it may be much smaller
and it is robust with respect to perturbations of $K$. 
In this regard, the notion of effective dimension is parallel to the notion of effective rank of a matrix
from numerical linear algebra (see e.g. \cite{RV JACM}).
With these observations in mind, it is not surprising that the ``true'', effective dimension of $K$ 
would be revealed (and would be the only obstruction according to Theorem~\ref{thm:embeddings})
when $K$ is being squeezed into a space of smaller dimension.

\smallskip

Let us illustrate dimension reduction on the example of finite sets $K \subset S^{n-1}$.
Since $w(K) \le C \sqrt{\log |K|}$ (see e.g. \cite[(3.13)]{LT}), Theorem~\ref{thm:embeddings} holds with 
$m \sim \log |K|$, and we can state it as follows.

\begin{corollary}[Dimension reduction for finite sets]					\label{cor:JL}
  Let $K \subset S^{n-1}$ be a finite set. Let $\delta>0$ and $m \ge C \delta^{-6} \log |K|$. 
  Then $K$ can be $\delta$-isometrically embedded into the Hamming cube $\{-1,1\}^m$. \qed
\end{corollary}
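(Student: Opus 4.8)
The plan is to derive the corollary directly from Theorem~\ref{thm:embeddings}, the only additional input being the classical bound on the Gaussian mean width of a finite set that was already quoted just above the statement. First I would recall why $w(K) \le C_0 \sqrt{\log |K|}$ for a finite $K \subseteq S^{n-1}$: for each fixed $x \in S^{n-1}$ the random variable $\langle g, x\rangle$ with $g \sim \NN(0, I_n)$ is standard normal, hence a sub-gaussian random variable with an absolute bound on its $\psi_2$-norm and variance $1$. Thus $\sup_{x \in K} |\langle g,x\rangle|$ is the maximum of at most $|K|$ such variables, and the standard maximal inequality for sub-gaussians (as in \cite[(3.13)]{LT}) gives $w(K) = \E \sup_{x \in K} |\langle g,x \rangle| \le C_0 \sqrt{\log |K|}$.

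Next I would simply substitute this estimate into Theorem~\ref{thm:embeddings}. Choosing the absolute constant $C$ in the hypothesis $m \ge C \delta^{-6} \log |K|$ large enough, namely at least the absolute constant of Theorem~\ref{thm:embeddings} times $C_0^2$, one gets $m \ge C \delta^{-6} \log|K| \ge (\text{const of Thm~\ref{thm:embeddings}})\,\delta^{-6} w(K)^2$, so the hypothesis of Theorem~\ref{thm:embeddings} is satisfied. That theorem then yields a $\delta$-isometric embedding of $K$ into $\{-1,1\}^m$; in fact it produces one explicitly, since for an $m \times n$ matrix $A$ with i.i.d. $\NN(0,1)$ entries the sign map $f(x) = \sign Ax$ is such an embedding with probability at least $1 - 2\exp(-c\delta^2 m)$, which is strictly positive, so the desired embedding exists (and indeed a random one works with high probability).

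There is no genuinely hard step here; Corollary~\ref{cor:JL} is merely the specialization of Theorem~\ref{thm:embeddings} to finite sets. The only two points deserving a line of care are: (i) applying the sub-gaussian maximal inequality to $|\langle g,x\rangle|$ rather than to $\langle g,x\rangle$, which costs only an absolute factor — one may take the maximum over $K \cup (-K)$, of size at most $2|K|$, and $\log(2|K|) \le 2\log|K|$ once $|K| \ge 2$, the case $|K| \le 1$ being trivial; and (ii) passing from the probabilistic construction to the deterministic existence assertion in the statement, which is immediate from positivity of the success probability. One may optionally remark that this corollary is a Hamming-cube counterpart of the Johnson--Lindenstrauss lemma, now with target dimension $m \sim \delta^{-6}\log|K|$ independent of the ambient dimension $n$.
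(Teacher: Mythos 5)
Your proof is correct and follows the same route the paper intends: plug the standard bound $w(K) \le C\sqrt{\log|K|}$ for a finite set into Theorem~\ref{thm:embeddings} and conclude by positivity of the success probability. The extra remarks about taking the maximum over $K \cup (-K)$ and about passing from a random to a deterministic embedding are fine bits of bookkeeping but do not change the argument, which the paper treats as an immediate specialization.
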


This fact should be compared to the {\em Johnson-Lindenstrauss lemma} for finite subsets $K \subset \R^n$ 
(\cite{JL}, see \cite[Section~15.2]{Matousek}) which states that if $m \ge C \delta^{-2} \log |K|$ then 
$K$ can be Lipschitz embedded into $\R^m$ as follows:
$$
\big| \|\bar{A}x - \bar{A}x'\|_2 - \|x-x'\|_2 \big| \le \delta \|x-x'\|_2, \quad x,x' \in K.
$$
Here $\bar{A} = m^{-1/2} A$ is the rescaled random Gaussian matrix $A$ from Theorem~\ref{thm:embeddings}.  Note that while the Johnson-Lindenstrauss lemma involves a Lipschitz embedding from $\R^n$ to $\R^m$, it is generally impossible to provide a Lipschitz embedding from subsets of $\R^n$ to the Hamming cube
(if there are points $x,x' \in K$ that are very close to each other); this is why we consider $\delta$-isometric embeddings.

Like  the Johnson-Lindenstrauss lemma, Corollary~\ref{cor:JL} can be proved directly by combining concentration inequalities
for $d_A(x,y)$ with a union bound over $|K|^2$ pairs $(x,y) \in K \times K$.  In fact, this method of proof allows for the weaker requirement $m \geq C \delta^{-2} \log |K|$. However, as we discuss later, 
this argument cannot be generalized in a straightforward way to prove Theorem~\ref{thm:embeddings} for general sets $K$.
The Hamming distance $d_A(x,y)$ is highly discontinuous, which makes it difficult to extend estimates from 
points $x,y$ in an $\e$-net of $K$ to nearby points.

\subsection{Cells of uniform tessellations}					\label{s:cells}

We mentioned two nice features of uniform tessellations in Facts~\ref{fact:embeddings} and \ref{fact:graph}.
Let us observe one more property: all cells of a uniform tessellation have small diameter. 
Indeed, $d_A(x,y) = 0$ iff points $x,y$ are in the same cell, so by \eqref{eq:uniform tessellation} we have:

\begin{fact}[Cells are small]				\label{fact:cells}
  Every cell of a $\delta$-uniform tessellation has diameter at most $\delta$. \qed
\end{fact}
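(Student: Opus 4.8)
The plan is simply to unwind the definitions; the statement is a one-line consequence of \eqref{eq:uniform tessellation}. First I would fix a cell $\mathcal{C}$ of the tessellation and two points $x, y \in \mathcal{C} \cap K$. By the convention $\sign(0) = 1$ the sign vector $\sign Ax$ is well-defined for every $x \in \R^n$, and two points lie in a common cell of the arrangement exactly when all of their signs agree, i.e. when none of the $m$ hyperplanes separates them. Hence, by the definition of $d_A$, we have $d_A(x,y) = 0$.

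Next I would apply the defining inequality of a $\delta$-uniform tessellation, \eqref{eq:uniform tessellation}, to this pair: $d(x,y) = |d(x,y) - d_A(x,y)| \le \delta$. Since $x, y$ were arbitrary points of $\mathcal{C} \cap K$, taking the supremum over all such pairs yields $\diam(\mathcal{C} \cap K) \le \delta$, where the diameter is measured in the normalized geodesic metric $d$ on $S^{n-1}$ — the same metric in which \eqref{eq:uniform tessellation} is phrased. That is the claim.

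There is no genuine obstacle here. The only points deserving a word of care are the boundary/sign convention mentioned above, which is precisely what makes the equivalence ``$d_A(x,y) = 0 \iff x,y$ lie in the same cell'' hold without exception, and the fact that ``diameter'' is understood with respect to $d$ rather than the Euclidean distance. One could also remark that the bound is essentially sharp in the sense that a single hyperplane cutting through the interior of $K$ already splits it into two cells, so uniformity can only force cells to shrink at the rate $\delta$; this is consistent with the metric pictures in Facts~\ref{fact:embeddings} and \ref{fact:graph}.
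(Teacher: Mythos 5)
Your proposal is correct and follows exactly the paper's own (one-line) reasoning: two points of $K$ in the same cell have $d_A(x,y)=0$, so \eqref{eq:uniform tessellation} gives $d(x,y)\le\delta$, hence the cell has diameter at most $\delta$. The extra remarks on the sign convention and on the metric are sound but not needed beyond what the paper already sets up.
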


With this, Theorem~\ref{thm:tessellations} immediately implies the following: 

\begin{corollary}[Cells of random uniform tessellations] 				\label{cor:cells}
  Consider a tessellation of a subset $K \subseteq S^{n-1}$ by $m \geq C\delta^{-6} w(K)^2$ random hyperplanes.
  Then, with probability at least $1-\exp(-c \delta^2 m)$, all cells of the tessellation have diameter at most $\delta$.
\end{corollary}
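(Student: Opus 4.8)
The plan is to derive Corollary~\ref{cor:cells} directly from Theorem~\ref{thm:tessellations} through the elementary Fact~\ref{fact:cells}; there is essentially no new work to do, since all the difficulty is already packaged into Theorem~\ref{thm:tessellations}. First I would apply that theorem with the given value $m \ge C\delta^{-6} w(K)^2$: it guarantees that, with probability at least $1 - 2\exp(-c\delta^2 m)$, the $m$ random hyperplanes form a $\delta$-uniform tessellation of $K$. I then fix a realization of the arrangement lying in this high-probability event, and argue deterministically from there.

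On this event I would invoke Fact~\ref{fact:cells}. Recall that a cell of the tessellation of $K$ is a set of the form $K \cap C$, where $C$ is a cell of the hyperplane arrangement in $\R^n$; equivalently, two points $x,y \in K$ lie in the same cell exactly when no hyperplane separates them, i.e. $d_A(x,y)=0$. For such a pair, the defining inequality \eqref{eq:uniform tessellation} of a $\delta$-uniform tessellation gives $d(x,y) = |d(x,y) - d_A(x,y)| \le \delta$. Taking the supremum over $x,y$ ranging in a fixed cell shows that every cell has geodesic diameter at most $\delta$, which is precisely Fact~\ref{fact:cells} specialized to this setting.

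Combining the two steps yields: with probability at least $1 - 2\exp(-c\delta^2 m)$, every cell of the tessellation of $K$ has diameter at most $\delta$. Finally I would absorb the harmless factor $2$ into the exponent (for instance by replacing $c$ with $c/2$ and using $\delta^2 m \ge c'$, which holds since $m \ge C\delta^{-6}w(K)^2 \ge C\delta^{-2}$, so that $2\exp(-c\delta^2 m) \le \exp(-\tfrac{c}{2}\delta^2 m)$ after adjusting the absolute constant $C$), obtaining the stated probability bound $1 - \exp(-c\delta^2 m)$. I do not anticipate any genuine obstacle: the only point worth noting is the trivial one that the inclusion used above is one-sided (being a $\delta$-uniform tessellation of $K$ implies all cells of $K$ are $\delta$-small), so no separate union bound or net argument over cells is required.
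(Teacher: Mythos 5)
Your proof is correct and is exactly the derivation the paper has in mind: Corollary~\ref{cor:cells} is stated immediately after Fact~\ref{fact:cells} precisely as a direct consequence of Theorem~\ref{thm:tessellations}, and the paper explicitly says the theorem ``immediately implies'' it. The paper's Section~\ref{sec:curvature} presents an alternative curvature argument only to obtain the sharper exponent $m \sim \delta^{-4} w(K)^2$; for the corollary as stated with $\delta^{-6}$, your Theorem-plus-Fact argument, together with your routine absorption of the factor $2$ into the constant in the exponent, is complete and matches the paper.
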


This result has also a direct proof, which moreover gives 
a slightly better bound $m \sim \delta^{-4} w(K)^2$. We present this 
``curvature argument'' in Section~\ref{sec:curvature}.

\subsection{Uniform tessellations in $\R^n$}

So far, we only worked with subsets $K \subseteq S^{n-1}$. 
It is not difficult to extend our results to bounded sets $K \subset \R^n$. 
This can be done by embedding such a set $K$ into $S^{n}$ (the sphere in one more dimension) 
with small bi-Lipschitz distortion. 
This elementary argument is presented in Section~\ref{sec:Rn}, and it yields
the following version of Theorem~\ref{thm:tessellations}:

\begin{theorem}[Random uniform tessellations in $\R^n$] 				\label{thm:tessellations Rn}
  Consider a bounded subset $K \subset \R^n$ with $\diam(K) = 1$.
  Let 
  \begin{equation}							\label{eq:tessellations Rn m}
  m \ge C \delta^{-12} w(K-K)^2.
  \end{equation}
  Then there exists an arrangement of $m$ affine hyperplanes in $\R^n$ and a scaling factor $\l>0$ 
  such that 
  $$
  \big| \l \cdot d_A(x,y) - \|x-y\|_2 \big| \le \delta, \quad x,y \in K.
  $$
  Here $d_A(x,y)$ denotes the fraction of the affine hyperplanes that separate $x$ and $y$.
\end{theorem}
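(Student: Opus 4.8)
\emph{Proof plan.} The plan is to deduce this from the spherical result, Theorem~\ref{thm:tessellations}, by lifting $K$ onto the sphere $S^{n}\subset\R^{n+1}$ through the \emph{homogenization} map, designed so that linear hyperplanes of $\R^{n+1}$ restrict to affine hyperplanes of $\R^n$ without changing which pairs of points they separate.

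First I would translate $K$ so that $0\in K$; this affects neither $\diam(K)$ nor $w(K-K)$ and can be undone on the hyperplanes at the end, and it yields $\twonorm{x}\le 1$ for all $x\in K$. Fixing a small parameter $\e>0$, set
$$
\Phi\colon K\to S^{n},\qquad \Phi(x)=\frac{(\e x,\,1)}{\sqrt{1+\e^{2}\twonorm{x}^{2}}} .
$$
Since $\twonorm{x}\le 1$, the image $\Phi(K)$ sits in a spherical cap of angular radius $O(\e)$ around the pole $e_{n+1}$, and a short Taylor computation --- using also that on $S^{n}$ the normalized geodesic distance is $\tfrac1\pi$ times the chordal distance up to a cubic correction --- gives the approximate scaled isometry
$$
\bigl|\,d(\Phi(x),\Phi(y))-\tfrac{\e}{\pi}\twonorm{x-y}\,\bigr|\le C_{1}\e^{2},\qquad x,y\in K .
$$
I would emphasize that these bounds are \emph{additive}: a merely multiplicative distortion estimate would be useless for pairs with $\twonorm{x-y}$ tiny. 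I also need the mean width of the lift: since the denominator in $\Phi$ is $\ge 1$, for $g=(g',g_{n+1})\sim\NN(0,I_{n+1})$ we have $|\langle g,\Phi(x)\rangle|\le\e\,|\langle g',x\rangle|+|g_{n+1}|$, hence $w(\Phi(K))\le\e\,w(K)+\sqrt{2/\pi}$; and since $0\in K$ we get $K\subseteq K-K$, so $w(K)\le w(K-K)$, while $w(K-K)\ge\sqrt{2/\pi}$ because $\diam(K)=1$ means $K-K$ contains a pair of points at distance $1$. Altogether $w(\Phi(K))\le 2\,w(K-K)$.

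Now pick a second parameter $\delta'>0$ and suppose $m\ge 4C(\delta')^{-6}w(K-K)^{2}$, so that $m\ge C(\delta')^{-6}w(\Phi(K))^{2}$. Theorem~\ref{thm:tessellations} (with $n+1$ in place of $n$) then provides, with positive probability, $m$ linear hyperplanes $a_{i}^{\perp}$ in $\R^{n+1}$ with $|d_{A}(u,v)-d(u,v)|\le\delta'$ for all $u,v\in\Phi(K)$. Writing $a_{i}=(b_{i},c_{i})$ with $b_{i}\in\R^{n}$, $c_{i}\in\R$, and using that $\Phi(x)$ is a positive multiple of $(\e x,1)$,
$$
\sign\langle a_{i},\Phi(x)\rangle=\sign\bigl(\e\langle b_{i},x\rangle+c_{i}\bigr)=\sign\bigl(\langle b_{i},x\rangle-t_{i}\bigr),\qquad t_{i}:=-c_{i}/\e ,
$$
which is exactly the orientation of $x$ with respect to the \emph{affine} hyperplane $H_{i}=\{x\in\R^{n}\colon\langle b_{i},x\rangle=t_{i}\}$ (here $b_{i}\ne 0$ almost surely). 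Hence the fraction $d_{A'}(x,y)$ of $H_{1},\dots,H_{m}$ separating $x$ and $y$ equals $d_{A}(\Phi(x),\Phi(y))$, and together with the isometry estimate, $\bigl|\,d_{A'}(x,y)-\tfrac{\e}{\pi}\twonorm{x-y}\,\bigr|\le\delta'+C_{1}\e^{2}$. Multiplying by $\l:=\pi/\e>0$, then choosing $\e\asymp\delta$ so that $\pi C_{1}\e\le\delta/2$ and $\delta'\asymp\e\delta\asymp\delta^{2}$ so that $\pi\delta'/\e\le\delta/2$, I obtain $\bigl|\,\l\,d_{A'}(x,y)-\twonorm{x-y}\,\bigr|\le\delta$; with these choices the hypothesis $m\ge 4C(\delta')^{-6}w(K-K)^{2}$ reads $m\ge C'\delta^{-12}w(K-K)^{2}$, which is \eqref{eq:tessellations Rn m}. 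Undoing the initial translation of $K$ (and translating the $H_i$ accordingly) completes the argument.

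The one genuinely delicate point is the choice of lift: it must be the homogenization $x\mapsto(\e x,1)/\twonorm{(\e x,1)}$ rather than the more obvious spherical lift $x\mapsto(\e x,\sqrt{1-\e^{2}\twonorm{x}^{2}})$, since only the former carries linear hyperplanes of $\R^{n+1}$ to \emph{affine} hyperplanes of $\R^{n}$ with identical separation patterns. Everything else --- the Taylor estimate for the distortion of $\Phi$, the monotonicity and translation behaviour of $w$, and the bookkeeping of the two small parameters $\e,\delta'$ (which is what forces the exponent $\delta^{-12}$) --- is routine.
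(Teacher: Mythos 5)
Your proposal is correct and takes essentially the same approach as the paper: the map $\Phi(x)=(\e x,1)/\twonorm{(\e x,1)}$ is precisely the paper's lift $Q(x\oplus t)$ with $t=1/\e$, and both arguments proceed by lifting to $S^n$, bounding the mean width of the lift, invoking Theorem~\ref{thm:tessellations} with a secondary accuracy parameter $\delta'\asymp\delta^2$, and converting linear hyperplanes in $\R^{n+1}$ back to affine hyperplanes in $\R^n$. The only cosmetic difference is that you track $w(K-K)$ throughout while the paper uses $w(K)$ and then invokes $w(K)\le w(K-K)$.
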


\begin{remark}[Mean width in $\R^n$]				\label{rem:K-K}
  While the quantity $w(K-K)$ appearing in \eqref{eq:tessellations Rn m} 
  is clearly bounded by $2w(K)$, it is worth noting that the quantity $w(K-K)$ 
  captures more accurately than $w(K)$ 
  the geometric nature of the ``mean width'' of $K$. Indeed, 
  $w(K-K) = \E h(g)$ where $h(g) = \sup_{x \in K} \< g,x\> - \inf_{x \in K} \< g,x\> $ is the distance 
  between the two parallel supporting hyperplanes of $K$ orthogonal to the random direction $g$, scaled by $\|g\|_2$.
\end{remark}

\subsection{Optimality}

The main object of our study is $m(K) = m(K,\delta)$, the smallest number of hyperplanes that provide a $\delta$-uniform tessellation of 
a set $K \subseteq S^{n-1}$. One has
\begin{equation}							\label{eq:upper lower}
\log_2 N(K,\delta) \le m(K,\delta) \le C \delta^{-6} w(K)^2,
\end{equation}
where $N(K,\delta)$ denotes the covering number of $K$, i.e.~the smallest number of balls of radius $\delta$ that cover $K$.
The upper bound in \eqref{eq:upper lower} is the conclusion of Theorem~\ref{thm:tessellations}. The lower bound holds because
a $\delta$-uniform tessellation provides a decomposition of $K$ into at most $2^m$ cells
each of which lies in a ball of radius $\delta$ by Fact~\ref{fact:cells}.

To compare the upper and lower bounds in \eqref{eq:upper lower}, recall Sudakov's inequality \cite[Theorem~3.18]{LT}
that yields
$$
\log N(K,\delta) \le C \delta^{-2} w(K)^2.
$$
While Sudakov's inequality cannot be reversed in general, there are many situations where it is sharp. 
Moreover, according to Dudley's inequality (see \cite[Theorem 11.17]{LT} and \cite[Lemma~2.33]{M}), 
Sudakov's inequality can always be reversed for some scale $\delta>0$ and up to a logarithmic factor in $n$.
(See also \cite{LMPT} for a discussion of sharpness of Sudakov's inequality.)
So the two sides of \eqref{eq:upper lower} are often close to each other, but there is in general some gap. 
We conjecture that the optimal estimate is
$$
c w(K)^2 \le \sup_{\delta>0} \delta^2 m(K,\delta) \le C  w(K)^2,
$$
so the mean width of $K$ seems to be completely responsible for the uniform tessellations of $K$. 

\smallskip

Note that the lower bound in \eqref{eq:upper lower} holds in greater generality. 
Namely, it is not possible to have $m < \log_2 N(K,\delta)$ for {\em any} decomposition 
of $K$ into $2^m$ pieces of diameter at most $\delta$. However, from the upper bound 
we see that with a slightly larger value $m \sim w(K)^2$,
{\em an almost best decomposition of $K$ is achieved by a random hyperplane tessellation}.

\smallskip

In this paper we have not tried to optimize the dependence of $m(K,\delta)$ on $\delta$. This interesting
problem is related to the open question on the optimal dependence on distortion in Dvoretzky's theorem. 
We comment on this in Section~\ref{sec:Dvoretzky}.

\subsection{Related work: embeddings of $K$ into normed spaces}

Embeddings of subsets $K \subseteq S^{n-1}$ into normed spaces were studied in geometric functional 
analysis \cite{KM, Sch}.
In particular, Klartag and Mendelson \cite{KM} were concerned with embeddings into $\ell_2^m$. 
They showed that for $m \ge C \delta^{-2} w(K)^2$ there exists a linear map $A: \R^n \to \R^m$ 
such that 
$$
\big| m^{-1/2} \|Ax\|_2 - 1 \big| \le \delta, \quad x \in K.
$$ 
One can choose $A$ to be an $m \times n$ random matrix with Gaussian entries as in Theorem~\ref{thm:embeddings},
or with sub-gaussian entries.   
Schechtman \cite{Sch} gave a simpler argument for a Gaussian matrix, which also works for embeddings into 
general normed spaces $X$. In the specific case of $X = \ell_1^m$, Schechtman's result states that for $m \ge C \delta^{-2} w(K)^2$
one has
$$
\big| m^{-1} \|Ax\|_1 - 1 \big| \le \delta, \quad x \in K.
$$ 
This result also follows from Lemma~\ref{lem:concentration} below. 

\subsection{Related work: one-bit compressed sensing}

Our present work was motivated by the development of {\em one-bit compressed sensing} in \cite{BB, JLBB, PV}
where Theorem~\ref{thm:embeddings} is used in the following context.
The vector $x$ represents a signal; the matrix $A$ represents a measurement map $\R^n \to \R^m$
that produces $m \ll n$ linear measurements of $x$; taking the sign of $Ax$ represents
quantization of the measurements (an extremely coarse, one-bit quantization).
The problem of one-bit compressed sensing is to recover the signal $x$ 
from the quantized measurements $f(x) = \sign Ax$. 

The problem of one-bit compressed sensing was introduced by Boufounos and Baraniuk \cite{BB}.
Jacques, Laska, Boufounos and Baraniuk \cite{JLBB} realized a connection of this problem to uniform tessellations 
of the set of sparse signals $K = \{x \in S^{n-1}:\; |\supp(x)| \le s\}$, 
and to almost isometric embedding of $K$ into the Hamming cube $\{-1,1\}^m$. 
For this set $K$, they proved Corollary~\ref{cor:cells} with $m \sim \delta^{-1} s \log(n/\delta)$ and 
a version of Theorem~\ref{thm:embeddings} for $m \sim \delta^{-2} s \log(n/\delta)$. The authors of the present paper
analyzed in \cite{PV} a bigger set of ``compressible'' signals $K' = \{ x \in S^{n-1}:\; \|x\|_1 \le \sqrt{s} \}$
and proved for $K'$ a version of Corollary~\ref{cor:cells} with $m \sim \delta^{-4} s \log(n/s)$. 
Since the mean widths of both sets $K$ and $K'$ are of the order $\sqrt{s \log(n/s)}$, 
Theorem~\ref{thm:embeddings} holds for these sets with $m \sim \delta^{-6} s \log(n/s)$. 
In other words, apart from the dependence of $\delta$ (which is an interesting problem), 
the prior results follow as partial cases from Theorem~\ref{thm:embeddings}.

It is important to note that Theorem~\ref{thm:embeddings} addresses only the theoretical aspect 
of one-bit compressed sensing problem, which guarantees that the quantized measurement map $f(x) = \sign Ax$ 
well preserves the geometry of signals. 
But one also faces an algorithmic challenge -- how to efficiently recover $x$ from $f(x)$, and specifically in polynomial time. 
We will not touch on this algorithmic aspect here but rather refer the reader to \cite{PV} 
and to our forthcoming work which is based on the results of this paper.

\subsection{Related work: locality-sensitive hashing}
\textit{Locality-sensitive hashing} is a method of dimension reduction.  One takes a set of high-dimensional vectors in $\R^n$ and the goal is to hash nearby vectors to the same bin with high probability.  More generally, one may desire that the distance between bins be nearly proportional to the distance between the original items.  There have been a number of papers which suggest to create such mappings onto the Hamming cube \cite{GW, AP, Charikar, ASS, KOR}, some of which use a random hyperplane tessellation as defined in this paper.  The new challenge considered herein is to create a locality-sensitve hashing for an infinite set.

\subsection{Overview of the argument}

Let us briefly describe our proof of the results stated above. 
Since the distance in the Hamming cube $\{-1,1\}^m$ can be expressed as $(2m)^{-1}\|x-y\|_1$, the 
Hamming cube is isometrically embedded in $\ell_1^m$. 
Before trying to embed $K \subseteq S^{n-1}$ into the Hamming cube as claimed in Theorem~\ref{thm:embeddings},
we shall make a simpler step and embed $K$ almost isometrically into the bigger space $\ell_1^m$ with $m \sim \delta^{-2} w(K)^2$. 
A result of this type was given by Schechtman \cite{Sch}. 
In Section~\ref{sec:into ell1} we prove a similar result by a simple and direct argument in probability in Banach spaces.

Our next and non-trivial step is to re-embed the set from $\ell_1^m$ into its subset, the Hamming cube $\{-1,1\}^m$.
In Section~\ref{sec:curvature} we give a simple ``curvature argument'' that allows us to deduce
Corollary~\ref{cor:cells} on the diameter of cells, and even with a better dependence on $\delta$, namely $m \sim \delta^{-4} w(K)^2$.
However, a genuine limitation of the curvature argument makes it too weak to deduce Theorem~\ref{thm:tessellations} this way.

We instead attempt to prove Theorem~\ref{thm:tessellations} by an $\e$-net argument, which typically proceeds as follows:
(a) show that $d_A(x,y) \approx d(x,y)$ holds for a fixed pair $x,y \in K$ with high probability;
(b) take the union bound over all pairs $x,y$ in an finite $\e$-net $N_\e$ of $K$;  
(c) extend the estimate from $N_\e$ to $K$ by approximation.
Unfortunately, as we indicate in Section~\ref{sec:soft} the approximation step (c) must fail due to the discontinuity 
of the Hamming distance $d_A(x,y)$. 

A solution proposed in \cite{B, JLBB} was to choose $\e$ so small that none of the random hyperplanes 
pass near points $x,y \in N_\e$ with high probability.  
This strategy was effective for the set $K = \{x \in S^{n-1}:\; |\supp(x)| \le s\}$
because the covering number of this specific set $K$ has a mild (logarithmic) dependence on $\e$, namely 
$\log N(K,\e) \le s \log (Cn/\e s)$. 
However, adapting this strategy to general sets $K$ would cause our estimate on $m$ to increase by a factor of $n$.

The solution we propose in the present paper is to ``soften'' the Hamming distance; see Section~\ref{sec:soft} for the precise notion.
The {\em soft Hamming distance} enjoys some continuity properties as described in Lemmas~\ref{lem:continuity} and \ref{lem:continuity L1}.
In Section~\ref{sec:proof tessellations} we develop the $\e$-net argument for the soft Hamming distance.
Interestingly, the approximation step (c) for the soft Hamming distance will be based on
the embedding of $K$ into $\ell_1^m$, which incidentally was our point of departure.

\subsection{Notation}

Throughout the paper, $C$, $c$, $C_1$, etc.~denote positive absolute constants whose values may change from line to line. 
For integer $n$, we denote $[n] = \{1,\ldots,n\}$.
The $\ell_p$ norms of a vector $x \in \R^n$ for $p \in \{0,1,2,\infty\}$ are defined 
as\footnote{Note that, strictly speaking, $\|\cdot\|_0$ is not a norm on $\R^n$.}
$$
\|x\|_0 = |\supp(x)| = | \{ i \in [n]: x(i) \ne 0 \} |, \;
\|x\|_1 = \sum_{i=1}^n |x_i|, \;
\|x\|_2 = \big( \sum_{i=1}^n x_i^2 \big)^{1/2}, \;
\|x\|_\infty = \max_{i \in [n]} |x_i|.
$$
We shall work with normed spaces $\ell_p^n = (\R^n, \|\cdot\|_p)$ for $p \in \{1,2,\infty\}$.
The unit Euclidean ball in $\R^n$ is denoted $B_2^n = \{ x \in \R^n:\; \|x\|_2 \le 1 \}$ and 
the unit Euclidean sphere is denoted $S^{n-1} = \{ x \in \R^n:\; \|x\|_2 = 1 \}$.

As usual, $\NN(0,1)$ stands for the univariate normal distribution with zero mean and unit variance,
and $\NN(0,I_n)$ stands for the multivariate normal distribution in $\R^n$ with zero mean and 
whose covariance matrix is identity $I_n$.

\section{Embedding into $\ell_1$}					\label{sec:into ell1}

\begin{lemma}[Concentration]				\label{lem:concentration}
  Consider a bounded subset $K \subset \R^n$ and independent random vectors 
  $a_1,\ldots,a_m \sim \NN(0,I_n)$ in $\R^n$.
  Let 
  $$
  Z = \sup_{x \in K} \Big| \frac{1}{m} \sum_{i=1}^m |\< a_i,x\> | - \sqrt{\frac{2}{\pi}} \|x\|_2 \Big|.
  $$
  
  (a) One has 
  \begin{equation}							\label{eq:L1 embedding}
  \E Z
  \le \frac{4 w(K)}{\sqrt{m}}.
  \end{equation}
  
  (b) The following deviation inequality holds: 
  \begin{equation}
  \label{eq:isoperimetric deviation inequality}
  \Pr{Z > \frac{4 w(K)}{\sqrt{m}} + u}
  \le 2 \exp \Big(-\frac{m u^2}{2 d(K)^2} \Big), \qquad u>0
  \end{equation}
  where $d(K) = \max_{x \in K} \|x\|_2$. 
\end{lemma}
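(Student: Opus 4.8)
The plan is to prove the two parts of Lemma~\ref{lem:concentration} separately, treating (b) as a concentration-of-measure statement about the Gaussian-Lipschitz function $Z = Z(a_1,\dots,a_m)$, and (a) as a symmetrization/contraction estimate on $\E Z$.

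\textbf{Part (b): Gaussian concentration.} I would view $Z$ as a function of the single Gaussian vector $(a_1,\dots,a_m) \in \R^{mn}$. The key observation is that $Z$ is Lipschitz in this vector with a computable constant. Indeed, writing $g_x : (a_i) \mapsto \frac{1}{m}\sum_i |\langle a_i, x\rangle|$, for two tuples $(a_i)$ and $(a_i')$ one has, for fixed $x$, $|g_x(a) - g_x(a')| \le \frac{1}{m}\sum_i |\langle a_i - a_i', x\rangle| \le \frac{\|x\|_2}{m}\sum_i \|a_i - a_i'\|_2 \le \frac{\|x\|_2}{\sqrt{m}}\big(\sum_i \|a_i - a_i'\|_2^2\big)^{1/2}$ by Cauchy--Schwarz. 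Since this bound is uniform in $x \in K$ (with $\|x\|_2$ replaced by $d(K)$) and $Z$ is a supremum over $x$ of $|g_x - \sqrt{2/\pi}\|x\|_2|$, the map $(a_i) \mapsto Z$ is Lipschitz with constant $d(K)/\sqrt{m}$ with respect to the Euclidean norm on $\R^{mn}$. The standard Gaussian concentration inequality (for Lipschitz functions of a standard normal vector) then gives $\Pr{Z > \E Z + u} \le \exp\big(-\frac{m u^2}{2 d(K)^2}\big)$, and combined with the bound $\E Z \le 4 w(K)/\sqrt{m}$ from part (a) this yields \eqref{eq:isoperimetric deviation inequality} (the factor $2$ and two-sided form being harmless). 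So part (b) reduces to part (a).

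\textbf{Part (a): bounding $\E Z$.} Here the route is symmetrization followed by the Gaussian (or Rademacher) contraction principle. First note that by rotation invariance, $\E|\langle a_i, x\rangle| = \sqrt{2/\pi}\,\|x\|_2$, so the summands in $Z$ are centered and $Z = \sup_{x\in K}\big|\frac{1}{m}\sum_i \big(|\langle a_i,x\rangle| - \E|\langle a_i,x\rangle|\big)\big|$. Standard symmetrization introduces i.i.d.\ Rademacher signs $\varepsilon_i$: $\E Z \le \frac{2}{m}\, \E \sup_{x\in K}\big|\sum_i \varepsilon_i |\langle a_i, x\rangle|\big|$. Now the map $t \mapsto |t|$ is $1$-Lipschitz and fixes $0$, so the contraction principle (Talagrand's, as in \cite[Theorem~4.12]{LT}) removes the absolute values: $\E_\varepsilon \sup_x |\sum_i \varepsilon_i |\langle a_i,x\rangle|| \le 2\, \E_\varepsilon \sup_x |\sum_i \varepsilon_i \langle a_i, x\rangle|$. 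Conditioning on the $a_i$ and comparing the Rademacher process with a Gaussian one (again \cite{LT}), and then using that $\sum_i g_i a_i$ with $g_i \sim \NN(0,1)$ has the same distribution as a single $\NN(0, (\sum_i \|a_i\|_2^2) \cdot \text{something})$ — more cleanly: I would instead directly replace the $\varepsilon_i$ by Gaussians $g_i$ at the cost of a constant, obtaining $\E Z \le \frac{C}{m}\,\E \sup_{x\in K}\big|\big\langle \sum_i g_i a_i, x\big\rangle\big|$, and then observe that conditionally on $(a_i)$, $\sum_i g_i a_i$ is a Gaussian vector, but it is cleaner still to note $\langle \sum_i g_i a_i, x\rangle = \sum_i g_i \langle a_i, x\rangle$ is, as a process in $x$, a Gaussian process; taking expectations over $(a_i)$ as well and using Jensen to pull the $a_i$-expectation inside, one identifies $\E_{g,a}\sup_x |\sum_i g_i\langle a_i,x\rangle|$ with a multiple of $\sqrt{m}\, w(K)$, because $\frac{1}{\sqrt m}\sum_i g_i a_i \sim \NN(0,I_n)$ in distribution. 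Tracking constants through symmetrization (factor $2$) and contraction (factor $2$) and the $\varepsilon \to g$ passage ($\sqrt{\pi/2}$) gives the stated constant $4$; I would not fuss over whether it is exactly $4$ versus some absolute constant, but the paper claims $4$ so the constants must be chased carefully.

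\textbf{Main obstacle.} The only genuinely delicate point is keeping the numerical constant at $4$ rather than an unspecified $C$: this forces one to use the cleanest possible chain — namely symmetrization to Rademachers, the contraction principle for $|\cdot|$, and then the sharp comparison $\E_g |\sum g_i \langle a_i,x\rangle| = \sqrt{2/\pi}\,(\sum_i \langle a_i,x\rangle^2)^{1/2}$ route, or equivalently the rotation-invariance identification $\frac{1}{\sqrt m}\sum_i g_i a_i \sim g \sim \NN(0,I_n)$ — rather than lossy Gaussian-vs-Rademacher comparisons. Everything else (the Lipschitz computation in part (b), the Gaussian concentration inequality, the centering identity $\E|\langle a,x\rangle| = \sqrt{2/\pi}\|x\|_2$) is routine. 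I expect the proof to be short, a half page at most.
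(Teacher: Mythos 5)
Part (b) of your proposal matches the paper's proof exactly: Lipschitz constant $d(K)/\sqrt m$ via two triangle inequalities and Cauchy--Schwarz, then the Gaussian concentration inequality, then plug in the bound on $\E Z$ from part~(a).

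In part~(a) your framework (centering via $\E\abs{\langle a_i,x\rangle} = \sqrt{2/\pi}\,\|x\|_2$, symmetrization with a factor $2$, contraction with a factor $2$) is also the paper's, but the closing step where you finish the argument is garbled, and the garbling is precisely why you cannot recover the constant $4$. The clean identity is \emph{not} $\tfrac{1}{\sqrt m}\sum_i g_i a_i \sim \NN(0,I_n)$ with Gaussian multipliers $g_i$ --- that claim is false, since the product of two independent standard Gaussians is not Gaussian; what you get is a scale mixture $\sqrt{\tfrac{1}{m}\sum g_i^2}\cdot\NN(0,I_n)$, and taming it costs you another Jensen step and does not yield an equality in distribution. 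The correct observation, which the paper uses, is that the \emph{Rademacher} signs can be absorbed: $\e_i a_i \overset{d}{=} a_i$ by the sign symmetry of $\NN(0,I_n)$, hence $\tfrac1m\sum_i \e_i a_i \overset{d}{=} \tfrac1m\sum_i a_i \sim \NN(0,\tfrac1m I_n) = g/\sqrt m$, giving $\E\sup_x \bigl| \langle \tfrac1m\sum\e_i a_i, x\rangle\bigr| = w(K)/\sqrt m$ with no Gaussian--Rademacher comparison at all. Your fallback route of replacing $\e_i$ by Gaussians $g_i$ via comparison picks up exactly the $\sqrt{\pi/2}$ loss you flag, so it proves the lemma only with $4\sqrt{\pi/2}\approx 5.01$ in place of $4$. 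So: right scheme, right constant from symmetrization and contraction, but the final distributional identification needs to stay with the Rademacher multipliers rather than promote them to Gaussians.
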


\begin{proof}
(a) Note that $\E |\< a_i,x\> | = \sqrt{\frac{2}{\pi}} \|x\|_2$ for all $i$. 
Let $\e_1, \hdots, \e_m$ be a sequence of iid rademacher random variables.
A standard symmetrization argument (see \cite[Lemma 6.3]{LT}) followed by the contraction principle (see \cite[Theorem 4.12]{LT})
yields that 
$$
\E Z \le 2\E \sup_{x \in K} \Big| \frac{1}{m} \sum_{i=1}^m \e_i \abs{\< a_i,x\>} \Big| 
\le 4\E \sup_{x \in K} \Big| \frac{1}{m} \sum_{i=1}^m \e_i \< a_i,x\> \Big| 
= 4\E \sup_{x \in K} \Big| \Big\langle \frac{1}{m} \sum_{i=1}^m \e_i a_i, x \Big\rangle \Big|.
$$
By the rotational invariance of the Gaussian distribution, 
$\frac{1}{m} \sum_{i=1}^m \e_i a_i$ is distributed identically with $g/\sqrt{m}$ where $g \sim \NN(0,I_n)$. 
Therefore 
$$
\E Z \le \frac{4}{\sqrt{m}} \E \sup_{x \in K} |\< g,x\> |
= \frac{4 w(K)}{\sqrt{m}}.
$$
This proves the upper bound in \eqref{eq:L1 embedding}. 

(b) We combine the result of (a) with the Gaussian concentration inequality.
To this end, we must first show that the map $A \mapsto Z = Z(A)$ is Lipschitz where $A = (a_1,\ldots,a_m)$ is considered
as a matrix in the space $\R^{nm}$ equipped with Frobenius norm $\|\cdot\|_F$ (which coincides with the Euclidean norm on $\R^{nm}$).  
It follows from two applications of the triangle inequality followed by two applications of the Cauchy-Schwarz inequality that
for $A = (a_1,\ldots,a_m), \, B = (b_1,\ldots,b_m) \in \R^{nm}$ we have
$$
\abs{Z(A) - Z(B)} \leq \sup_{x \in K} \frac{1}{m} \sum_{i=1}^m \abs{\< a_i - b_i, x \> } \leq \frac{d(K)}{m} \sum_{i=1}^m \twonorm{a_i - b_i} \leq \frac{d(K)}{\sqrt{m}} \fronorm{A - B}.
$$
Thus $Z$ has Lipschitz constant bounded by $d(K)/\sqrt{m}$.  
We may now bound the deviation probability for $Z$ using the Gaussian concentration inequality (see \cite[Equation 1.6]{LT}) as follows:
\[\Pr{\abs{Z - \E Z} \geq u} \leq 2 \exp(-mu^2/2d(K)^2).\]
The deviation inequality \eqref{eq:isoperimetric deviation inequality} now follows from the bound on $\E Z$ from (a).
\end{proof}

\begin{remark}[Random matrix formulation]			\label{rem:rmt}
  One can state Lemma~\ref{lem:concentration} in terms of random matrices. 
  Indeed, let $A$ be an $m \times n$ random matrix with independent $\NN(0,1)$ entries.
  Then its rows $a_i$ satisfy the assumption of Lemma~\ref{lem:concentration}, and 
  we can express $Z$ as 
  \begin{equation}							\label{eq:Z}
  Z = \sup_{x \in K} \Big| \frac{1}{m} \|Ax\|_1 - \sqrt{\frac{2}{\pi}} \|x\|_2 \Big|.
  \end{equation}
\end{remark}

Using this remark for the set $K-K$, we obtain a linear embedding of $K$ into $\ell_1$:

\begin{corollary}[Embedding into $\ell_1$]				\label{cor:into L1}
  Consider a subset $K \subset \ell_2^n$ and let $\delta>0$. Let 
  $$
  m \ge C \delta^{-2} w(K)^2.
  $$
  Then, with probability at least $1 - 2 \exp(-m\delta^2/32)$, the linear map $f : K \to \ell_1^m$ 
  defined as $f(x) = \frac{1}{m} \sqrt{\frac{\pi}{2}} Ax$ is a $\delta$-isometry. 
  Thus $K$ can be linearly embedded into $\ell_1^m$ with Gromov-Haussdorff distortion at most $\delta$. 
\end{corollary}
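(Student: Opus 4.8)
The plan is to apply Lemma~\ref{lem:concentration} --- in the random-matrix form of Remark~\ref{rem:rmt} --- to the difference set $K-K$ rather than to $K$ itself. The point is that controlling all the distances $\|x-y\|_2$, $x,y\in K$, is the same as controlling the norms $\|u\|_2$, $u\in K-K$, and $A(x-y)=Ax-Ay$; so the quantity $Z$ of \eqref{eq:Z} for the set $K-K$ is exactly
$$
Z=\sup_{x,y\in K}\Big|\tfrac1m\|Ax-Ay\|_1-\sqrt{\tfrac2\pi}\,\|x-y\|_2\Big|.
$$
Multiplying through by $\sqrt{\pi/2}$ and writing $f(x)=\tfrac1m\sqrt{\tfrac\pi2}\,Ax$, this equals $\sup_{x,y\in K}\big|\,d_{\ell_1^m}(f(x),f(y))-\|x-y\|_2\,\big|$. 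Hence it suffices to prove that $\sqrt{\pi/2}\,Z\le\delta$, equivalently $Z\le\sqrt{2/\pi}\,\delta$, on the stated event; and once $f$ satisfies this two-sided bound on $K$ it is automatically a $\delta$-isometric embedding onto its image, since the surjectivity requirement in the definition of a $\delta$-isometry is vacuous when the target space is taken to be $f(K)$.

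First I would estimate $\E Z$. By Lemma~\ref{lem:concentration}(a) applied to $K-K$ and the elementary inequality $w(K-K)\le 2w(K)$, we get $\E Z\le 4w(K-K)/\sqrt m\le 8w(K)/\sqrt m$. Choosing the absolute constant $C$ in $m\ge C\delta^{-2}w(K)^2$ large enough makes $\E Z\le\tfrac12\sqrt{2/\pi}\,\delta$, and in particular also $4w(K-K)/\sqrt m\le\tfrac12\sqrt{2/\pi}\,\delta$.

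Then I would invoke the deviation inequality \eqref{eq:isoperimetric deviation inequality} for $K-K$ with the choice $u=\sqrt{2/\pi}\,\delta-4w(K-K)/\sqrt m\ge\tfrac12\sqrt{2/\pi}\,\delta$. This gives
$$
\Pr{Z>\sqrt{2/\pi}\,\delta}\le 2\exp\!\Big(-\frac{m u^2}{2\,d(K-K)^2}\Big),
$$
and since $d(K-K)=\diam(K)$ one reads off the claimed probability $2\exp(-m\delta^2/32)$ in the regime of interest (e.g.\ $K\subseteq S^{n-1}$, where $d(K-K)\le 2$), after a routine bookkeeping of the numerical constants. Combined with the reduction of the first paragraph, this proves the corollary.

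There is no genuine obstacle here; the statement is a direct corollary of Lemma~\ref{lem:concentration}. The only things to get right are (i) feeding the difference set $K-K$ into the lemma --- which is what produces the scaling factor $\sqrt{\pi/2}$ and the harmless factor $2$ in $w(K-K)\le 2w(K)$ --- and (ii) keeping track of the diameter term $d(K-K)$ when passing through the Gaussian concentration estimate.
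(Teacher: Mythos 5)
Your proposal is correct and follows essentially the same route as the paper: apply Lemma~\ref{lem:concentration} (in the random-matrix form of Remark~\ref{rem:rmt}) to the difference set $K-K$, use $w(K-K)\le 2w(K)$, and then invoke the deviation inequality \eqref{eq:isoperimetric deviation inequality}. You are in fact slightly more careful than the paper about the $\sqrt{\pi/2}$ rescaling and about making explicit the role of $d(K-K)=\diam(K)$ in extracting the stated exponent $m\delta^2/32$.
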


\begin{proof}
Let $A$ be the random matrix as in Remark~\ref{rem:rmt}. Using Lemma~\ref{lem:concentration}
for $K-K$ and noting the form of $Z$ in \eqref{eq:Z}, we conclude that the following event holds with probability at least $1 - 2 \exp(-m\delta^2/32)$:
$$
\Big| \frac{1}{m} \|Ax-Ay\|_1 - \sqrt{\frac{2}{\pi}} \|x-y\|_2 \Big| 
\le \frac{8w(K-K)}{\sqrt{m}} \le \frac{16 w(K)}{\sqrt{m}} \le \delta,  \qquad x,y \in K.
$$
\end{proof}

\begin{remark}
  The above argument shows in fact that Corollary~\ref{cor:into L1} holds for
  $$
  m \ge C \delta^{-2} w(K-K)^2.
  $$
  As we noticed in Remark~\ref{rem:K-K}, the quantity $w(K-K)$ more accurately reflects 
  the geometric meaning of the mean width than $w(K)$.
\end{remark}

\begin{remark}[Low M$^*$ estimate]
 Note that for the subspace $E = \ker A$ we have from \eqref{eq:Z} that 
 $Z \ge \sup_{x \in K \cap E} \sqrt{\frac{2}{\pi}} \|x\|_2 = \sqrt{\frac{2}{\pi}} \, d(K \cap E)$. 
 Then Lemma~\ref{lem:concentration} implies that 
  \begin{equation}							\label{eq:low M*}
  \E d(K \cap E) \le \frac{6 w(K)}{\sqrt{m}}. 
  \end{equation}  
  By rotation invariance of Gaussian distribution, inequality \eqref{eq:low M*} holds for a random subspace 
  $E$ in $\R^n$ of given codimension $m \le n$, uniformly distributed according to the Haar measure.
  This result recovers (up to the absolute constant $6$ which can be improved) 
  the so-called {\em low M$^*$ estimate} from geometric functional analysis,
  see \cite[Section 15.1]{LT}.
\end{remark}

\begin{remark}[Dimension reduction]
  As we emphasized in the introduction, for many sets $K \subset \R^n$ one has $w(K) \ll n$. 
  In such cases Corollary~\ref{cor:into L1} works for $m \ll n$.
  The embedding of $K$ into $\ell_1^m$ yields dimension reduction for $K$ (from $n$ to $m\ll n$ dimensions). 
  
  For example, if $K$ is a finite set then $w(K) \le C \sqrt{\log |K|}$ (see e.g. \cite[(3.13)]{LT}), 
  and so Corollary~\ref{cor:into L1} applies with $m \sim \log |K|$. 
  This gives the following variant of the {\em Johnson-Lindenstrauss Lemma}: every finite subset of a Euclidean space
  can be linearly embedded in $\ell_1^m$ with $m \sim \log|K|$ and with small distortion in the Gromov-Haussdorff metric.
  Stronger variants of Johnson-Lindenstrauss lemma are known for {\em Lipschitz} rather than Gromov-Haussdorff embeddings
  into $\ell_2^m$ and $\ell_1^m$ \cite{AC, Sch}. However, for general sets $K$ (in particular
  for any set with nonempty interior) a Lipschitz embedding into lower dimensions 
  is clearly impossible; still a Gromov-Haussdorff embedding exists due to Corollary~\ref{cor:into L1}. 
\end{remark}

\section{Proof of Corollary~\ref{cor:cells} by a curvature argument}				\label{sec:curvature}

In this section we give a short  argument that leads to a version of Corollary~\ref{cor:cells} 
with a slightly better dependence of $m$ on $\delta$. 

\begin{theorem}[Cells of random uniform tessellations] 					\label{thm:cells}
  Consider a subset $K \subseteq S^{n-1}$ and let $\delta > 0$. 
  Let 
  $$
  m \ge C \delta^{-4} w(K)^2
  $$  
  and consider an arrangement of $m$ independent random hyperplanes in $\R^n$ that are 
  uniformly distributed according to the Haar measure.
  Then, with probability at least $1-2\exp(-c \delta^4 m)$, all cells of the tessellation have diameter at most $\delta$.
\end{theorem}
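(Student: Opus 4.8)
The plan is a short \textbf{curvature argument}: combine the $\ell_1$-embedding of Corollary~\ref{cor:into L1} with one elementary cancellation identity valid for points lying in a common cell, and then lose a square root to the curvature of the sphere. Since only the directions of the hyperplanes matter, we may realize the Haar-random hyperplanes as $a_i^{\perp}$ with $a_1,\dots,a_m\sim\NN(0,I_n)$ independent (this is the same reduction used to pass between Theorems~\ref{thm:tessellations} and~\ref{thm:embeddings}); let $A$ be the matrix with rows $a_i$.

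First I would pass to the set $L = K^{\pm}-K^{\pm}$, where $K^{\pm}=K\cup(-K)$. Because $\sup_{z\in K^{\pm}}|\langle g,z\rangle|=\sup_{x\in K}|\langle g,x\rangle|$ we have $w(K^{\pm})=w(K)$, so Corollary~\ref{cor:into L1} applied to $K^{\pm}$ with an accuracy parameter $\delta'$ (chosen at the end) shows: if $m\ge C\delta'^{-2}w(K)^2$, then with probability at least $1-2\exp(-m\delta'^2/32)$ the event
\[
\mathcal{E}:\qquad \Big|\tfrac1m\|Az\|_1-\sqrt{\tfrac2\pi}\,\|z\|_2\Big|\le\delta'\quad\text{for all }z\in L
\]
holds. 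Note $L$ contains $2x$, $x+y$ and $x-y$ for all $x,y\in K$; in particular $\mathcal{E}$ applied to $2x$ gives $\big|\tfrac1m\|Ax\|_1-\sqrt{2/\pi}\big|\le\delta'/2$ for every $x\in K$.

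Now the one idea. Suppose $x,y\in K$ lie in the same cell, i.e.\ $\sign Ax=\sign Ay$. Then $\langle a_i,x\rangle$ and $\langle a_i,y\rangle$ have the same sign for each $i$, so $|\langle a_i,x+y\rangle|=|\langle a_i,x\rangle|+|\langle a_i,y\rangle|$, and summing over $i$,
\[
\tfrac1m\|A(x+y)\|_1=\tfrac1m\|Ax\|_1+\tfrac1m\|Ay\|_1.
\]
On $\mathcal{E}$ the right-hand side equals $2\sqrt{2/\pi}\pm\delta'$, whereas $\mathcal{E}$ applied to $x+y\in L$ identifies the left-hand side as $\sqrt{2/\pi}\,\|x+y\|_2\pm\delta'$; comparing, $\|x+y\|_2\ge 2-c_1\delta'$. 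By the parallelogram law $\|x-y\|_2^2=4-\|x+y\|_2^2\le c_2\delta'$, hence $\|x-y\|_2\le c_3\sqrt{\delta'}$, and since $d(u,v)\le\tfrac12\|u-v\|_2$ on $S^{n-1}$ (because $\arcsin t\le\tfrac\pi2 t$), we get $d(x,y)\le c_4\sqrt{\delta'}$. Choosing $\delta'=c_5\delta^2$ makes this at most $\delta$; then $m\ge C\delta'^{-2}w(K)^2$ becomes $m\ge C'\delta^{-4}w(K)^2$ and the failure probability becomes $2\exp(-c\delta^4 m)$, as claimed.

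There is no serious obstacle beyond spotting the cancellation identity. The only point worth flagging is that the square root lost in the final step — which is what upgrades the $\delta^{-2}$ of the $\ell_1$ embedding to $\delta^{-4}$ — is precisely the curvature of the sphere, i.e.\ the quadratic behavior $\|x+y\|_2=2-\Theta(\|x-y\|_2^2)$ near the diagonal. This is also the reason the argument, though enough to bound cell diameters, is too lossy to yield the full uniform-tessellation statement of Theorem~\ref{thm:tessellations}, for which the softened Hamming distance is needed.
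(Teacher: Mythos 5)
Your argument is correct and is essentially the paper's own ``curvature argument'': the paper applies Lemma~\ref{lem:concentration} to $K$ and $\tfrac12(K+K)$ and uses the midpoint $z=\tfrac12(x+y)$, while you apply it (via Corollary~\ref{cor:into L1}) to $K^{\pm}-K^{\pm}$ and use $x+y$ directly, but the key steps---the sign-cancellation identity $|\langle a_i,x+y\rangle|=|\langle a_i,x\rangle|+|\langle a_i,y\rangle|$ for same-cell points, the parallelogram law, and the choice $\delta'\sim\delta^2$---are identical.
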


The argument is based on Lemma~\ref{lem:concentration}. If points $x,y \in K$ belong 
to the same cell, then the midpoint $z = \frac{1}{2}(x+y)$ also belongs to the same cell (after normalization). 
Using Lemma~\ref{lem:concentration} one can then show that $\|z\|_2 \approx \frac{1}{2}(\|x\|_2 + \|y\|_2) = 1$.
Due to the curvature of the sphere, this forces the length of the interval $\|x-y\|_2$ to be small, 
which means that the diameter of the cell is small. The formal argument is below. 

\begin{proof}
We represent the random hyperplanes as $\{a_i\}^\perp$, where $a_1,\ldots,a_m \sim \NN(0,I_n)$
are independent random vectors in $\R^n$.
Let $\delta, m$ be as in the assumptions of the theorem. 
We shall apply Lemma~\ref{lem:concentration} for the sets $K$ and $\frac{1}{2}(K+K)$
and for $u = \e/2$, where we set $\e = \delta^2/16$. 
Since the diameters of both these sets are bounded by $1$, we obtain that with probability at least $1-2\exp(-c \delta^4 m)$ 
the following event holds:
\begin{equation}							\label{eq:v}
\Big| \sqrt{\frac{\pi}{2}} \frac{1}{m} \sum_{i=1}^m |\< a_i,v\> | - \|v\|_2 \Big| < \e, \qquad
v \in K \cup \frac{1}{2}(K+K).
\end{equation}

Assume that the event \eqref{eq:v} holds. Consider a pair of points $x,y \in K$ that belong to the same cell of the tessellation, 
which means that 
$$
\sign \< a_i,x\> = \sign \< a_i, y\> , \qquad i \in [m].
$$
To complete the proof is suffices to show that $\|x-y\|_2 \le \delta$. 
This will give desired diameter $\delta$ in the Euclidean metric. 
Furthermore, since for small $\delta$ the Euclidean and the geodesic distances are equivalent, the conclusion will hold for the geodesic distance as well. 

We shall use \eqref{eq:v} for 
$x, y \in K$ and for the midpoint $z := \frac{1}{2}(x+y) \in \frac{1}{2}(K+K)$. Clearly 
$\sign \< a_i, z\> = \sign \< a_i,x\> = \sign \< a_i, y\> $, hence 
$$
|\< a_i,z\> | = |\< a_i, x\> | + |\< a_i,y\> |, \qquad i \in [m].
$$ 
Therefore we obtain from \eqref{eq:v} that 
\begin{align}  \label{eq:z large}
\|z\|_2 
  &\ge \sqrt{\frac{\pi}{2}} \frac{1}{m} \sum_{i=1}^m |\< a_i,z\> | - \e		
  = \frac{1}{2} \Big[ \sqrt{\frac{\pi}{2}} \frac{1}{m} \sum_{i=1}^m |\< a_i,x\> | + \sqrt{\frac{\pi}{2}} \frac{1}{m} \sum_{i=1}^m |\< a_i,y\> | \Big] - \e \\
  &\ge \frac{1}{2} (\|x\|_2-\e + \|y\|_2-\e) - \e
  = 1-2\e.  \nonumber
\end{align}
By the parallelogram law, we conclude that 
$$
\|x-y\|_2^2 = 4 - \|x+y\|_2^2
= 4(1-\|z\|_2^2) \le 16\e = \delta^2.
$$
This completes the proof. 
\end{proof}

\subsection{Limitations of the curvature argument}

Unfortunately, the curvature argument does not lend itself to proving the more general result, Theorem~\ref{thm:tessellations}
on uniform tessellations. To see why, suppose $x,y \in K$ do not belong to the same cell but instead 
$d_A(x,y) = d$ for some small $d \in (0,1)$. Consider the set of mismatched signs 
$$
T := \big\{ i \in [m]:\; \sign \< a_i,x\> \ne \sign \< a_i, y\> \big\}; \qquad \frac{|T|}{m} = d.
$$
These signs create an additional error term in the right hand side of \eqref{eq:z large}, which is 
\begin{equation}							\label{eq:mismatch}
\sqrt{\frac{\pi}{2}} \frac{1}{m} \sum_{i \in T} |\< a_i,v_i\> | \qquad \text{where } v_i \in \{x,y\}.
\end{equation}
By analogy with Lemma~\ref{lem:concentration}, we can expect that this term should be approximately 
equal $|T|/m = d$. If this is true, then \eqref{eq:z large} becomes in our situation 
$\|z\|_2 \ge 1 - 2\e - d$, which leads as before to 
$\|x-y\|_2^2 \lesssim \e + d$.
Ignoring $\e$, we see that the best estimate the curvature argument can give is
$d(x,y) \lesssim \sqrt{d_A(x,y)}$
rather than $d(x,y) \lesssim d_A(x,y)$ that is required in Theorem~\ref{thm:tessellations}. 

\smallskip

The weak point of this argument is that it takes into account the size of $T$ but ignores the nature of $T$.
For every $i \in T$, the hyperplane $\{a_i\}^\perp$ passes through the arc connecting $x$ and $y$.
If the length of the arc $d(x,y)$ is small, this creates a strong constraint on $a_i$. 
Conditioning the distribution of $a_i$ on the constraint that $i \in T$ creates a bias toward smaller values of 
$|\< a_i,x\> |$ and $|\< a_i,y\> |$. As a result, the conditional expected value
of the error term \eqref{eq:mismatch} should be smaller than $d$.
Computing this conditional expectation is not a problem for a given pair $x,y$, 
but it seems to be difficult to carry out a uniform argument over $x,y \in K$ 
where the (conditional) distribution of $a_i$ depends on $x,y$. 

We instead propose a different and somewhat more conceptual way to deduce Theorem~\ref{thm:tessellations}
from Lemma~\ref{lem:concentration}. This argument will be developed in the rest of this paper.

\subsection{Dvoretzky theorem and dependence on $\delta$}			\label{sec:Dvoretzky}

The unusual dependence $\delta^{-4}$ in Theorem~\ref{thm:cells} is related to the open problem 
of the optimal dependence on distortion in the Dvoretzky theorem. 

Indeed, consider the special case of the tessellation problem where $K = S^{n-1}$ and $w(K) \sim \sqrt{n}$.  
Then Lemma~\ref{lem:concentration} in its geometric formulation (see equation \eqref{eq:Z} and Corollary~\ref{cor:into L1}) 
states that $\ell_2^n$ embeds into $\ell_1^m$ whenever $m \ge C \e^{-2} n$, meaning that 
$$
(1-\e) \|x\|_2 \le \|\Phi x\|_1 \le (1+\e) \|x\|_2, \qquad x \in \R^n, 
$$
where $\Phi = \sqrt{\frac{\pi}{2}} \frac{1}{m} A$.
Equivalently, there exists an $n$-dimensional subspace of $\ell_1^m$ that is $(1+\e)$-Euclidean, 
where $n \sim \e^2 m$. This result recovers the well known Dvoretzky theorem in V.~Milman's formulation 
(see \cite[Theorem 4.2.1]{GM}) for the space $\ell_1^m$, 
and with the best known dependence on $\e$. However, it is not known whether $\e^2$ is the optimal dependence for $\ell_1^m$; 
see \cite{Sch} for a discussion of the general problem of dependence on $\e$ in Dvoretzky theorem.
 
These observation suggest that we can reverse our logic. Suppose one can prove Dvoretzky theorem for $\ell_1^m$ with a better dependence
on $\e$, thereby constructing a $(1+\e)$-Euclidean subspace of dimension $n \sim f(\e) m$ with $f(\e) \gg \e^2$. 
Then such construction can replace Lemma~\ref{lem:concentration} in the curvature argument.
This will lead to Theorem~\ref{thm:cells} for $K = S^{n-1}$ with an improved dependence on $\delta$, namely with 
$m \sim f(\delta^2) n$.
Concerning lower bounds, the best possible dependence of $m$ on $\delta$ should be $\delta^{-1}$, which follows by considering the case $n=2$. 
This dependence will be achieved if Dvoretzky theorem for $\ell_1^m$ is valid with $n \sim \e^{1/2} m$. This is unknown.

\section{Toward Theorem~\ref{thm:tessellations}: a soft Hamming distance}			\label{sec:soft}

Our proof of Theorem~\ref{thm:tessellations} will be based on a covering argument. 
A standard covering argument of geometric functional analysis would proceed in our situation as follows:

\begin{enumerate}[(a)]
  \item Show that $d_A(x,y) \approx d(x,y)$ with high probability for a fixed pair $x,y$. 
    This can be done using standard concentration inequalities.
  \item Prove that $d_A(x,y) \approx d(x,y)$ uniformly for all $x,y$ in a finite $\e$-net $N_\e$ of $K$. 
    Sudakov's inequality can be used to estimate the cardinality of $N_\e$ via the mean width $w(K)$. 
    The conclusion will follow from step 1 by the union bound over $(x,y) \in N_\e \times N_\e$. 
  \item Extend the estimate $d_A(x,y) \approx d(x,y)$ from $x,y \in N_\e$ to $x,y \in K$ by approximation.
\end{enumerate}

While the first two steps are relatively standard, step (c) poses a challenge in our situation. The Hamming distance $d_A(x,y)$ 
is a discontinuous function of $x,y$, so it is not clear whether the estimate $d_A(x,y) \approx d(x,y)$
can be extended from a pair points $x,y \in N_\e$ to a pair of nearby points. In fact, for some tessellations this 
task is impossible. Figure~\ref{fig:non-uniform-tessellation} shows that there exist very non-uniform tessellations
that are nevertheless very uniform for an $\e$-net, namely one has $d_A(x,y) = d(x,y)$ for all $x,y \in N_\e$.
The set $K$ in that example is a subset of the plane $\R^2$, and one can clearly embed such a set with 
into the sphere $S^2$ as well.

\begin{figure}[htp]		
  \centering \includegraphics[height=2.2cm]{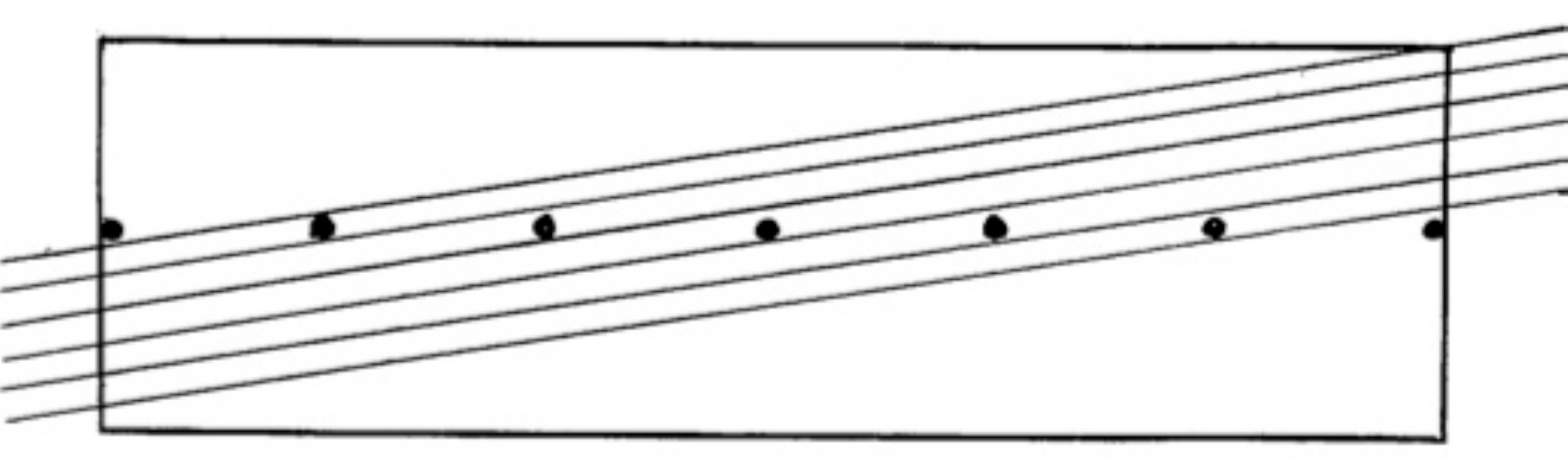} 
  \caption{This hyperplane tessellation of the set $K =[-\frac{1}{2}, \frac{1}{2}] \times [-\frac{\e}{2},\frac{\e}{2}]$ is very non-uniform,
  as all cells have diameter at least $1$. 
  The tessellation is nevertheless very uniform for the $\e$-net $N_\e = \e\Z \cap K$, as $d_A(x,y) = \|x-y\|_2$ for all $x,y \in N_\e$.}
  \label{fig:non-uniform-tessellation}
\end{figure}

To overcome the discontinuity problem, we propose to work with a soft version of the Hamming distance.
Recall that $m$ hyperplanes are determined by their normals $a_1,\ldots,a_m \in \R^n$, which we 
organize in an $m \times n$ matrix $A$ with rows $a_i$.
Then the usual (``hard'') Hamming distance $d_A(x,y)$ on $\R^n$ with respect to $A$ with can be expressed as
\begin{equation}							\label{eq:hard}
d_A(x,y) = \frac{1}{m} \sum_{i=1}^m \one_{\EE_i}, \quad  
\text{where} \quad \EE_i = \{ \sign\< a_i, x\> \ne \sign\< a_i, y\> \}. 
\end{equation}

\begin{definition}[Soft Hamming distance]
  Consider an $m \times n$ matrix $A$ with rows $a_1,\ldots,a_m$, and let $t \in \R$. 
  The {\em soft Hamming distance} $d_A^t(x,y)$ on $\R^n$ is defined as 
  \begin{gather}
  d_A^t(x,y) = \frac{1}{m} \sum_{i=1}^m \one_{\FF_i}, \quad  \text{where} \nonumber\\
  \FF_i = \{ \< a_i, x\> > t, \; \< a_i, y\> < -t \} \cup \{ -\< a_i, x\> > t, \; -\< a_i, y\> < -t \}.		\label{eq:soft}
  \end{gather}
\end{definition}

Both positive and negative $t$ may be considered. For positive $t$ the soft Hamming distance
counts the hyperplanes that separate $x,y$ well enough; for negative $t$ it counts the hyperplanes
that separate or nearly separate $x,y$.

\begin{remark}[Comparison of soft and hard Hamming distances]
  Clearly $d_A^t(x,y)$ is a non-increasing function of $t$. Moreover, 
  \begin{align*}
  d_A^t(x,y) = d_A(x,y) \quad &\text{for } t = 0; \\
  d_A^t(x,y) \le d_A(x,y) \quad &\text{for } t \ge 0; \\
  d_A^t(x,y) \ge d_A(x,y) \quad &\text{for } t \le 0.
  \end{align*}
\end{remark}

The soft Hamming distance for a fixed $t$ is as discontinuous as the usual (hard) Hamming distance. 
However, some version of continuity emerges when we allow $t$ to vary slightly:

\begin{lemma}[Continuity]					\label{lem:continuity}
  Let $x,y,x',y' \in \R^n$, and assume that $\|Ax'\|_\infty \le \e$, $\|Ay'\|_\infty \le \e$ for some $\e>0$.
  Then for every $t \in \R$ one has
  $$
  d_A^{t+\e}(x,y) \le d_A^t(x+x',y+y') \le d_A^{t-\e}(x,y).
  $$  
\end{lemma}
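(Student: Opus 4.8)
The plan is to prove the two inequalities by a term-by-term comparison of the indicators $\one_{\FF_i}$ defining the soft Hamming distances, since both sides of each inequality are averages of such indicators over $i \in [m]$. Fix $i$ and abbreviate $u = \langle a_i, x\rangle$, $v = \langle a_i, y\rangle$, $u' = \langle a_i, x'\rangle$, $v' = \langle a_i, y'\rangle$; by hypothesis $|u'| \le \e$ and $|v'| \le \e$. Note that membership in $\FF_i$ at threshold $s$ for a pair $(p,q)$ means $\{p > s \text{ and } q < -s\}$ or $\{p < -s \text{ and } q > s\}$. Once the claim is reduced to these scalar statements, summing over $i$ and dividing by $m$ transfers it to the soft Hamming distances.

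First I would prove the left inequality $d_A^{t+\e}(x,y) \le d_A^t(x+x',y+y')$ by showing that, for each $i$, membership of $(u,v)$ in $\FF_i$ at threshold $t+\e$ implies membership of $(u+u',v+v')$ in $\FF_i$ at threshold $t$. In the first alternative $u > t+\e$ and $v < -(t+\e)$; since $u' \ge -\e$ we get $u+u' > t$, and since $v' \le \e$ we get $v+v' < -t$. The second alternative is handled symmetrically (with the roles of the signs reversed). Hence $\one_{\FF_i}$ for $(x,y)$ at threshold $t+\e$ is dominated by $\one_{\FF_i}$ for $(x+x',y+y')$ at threshold $t$, and averaging over $i$ gives the claim.

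For the right inequality $d_A^t(x+x',y+y') \le d_A^{t-\e}(x,y)$ I would argue in the opposite direction: if $(u+u',v+v') \in \FF_i$ at threshold $t$, say $u+u' > t$ and $v+v' < -t$, then $u > t - u' \ge t-\e$ and $v < -t - v' \le -(t-\e)$, so $(u,v) \in \FF_i$ at threshold $t-\e$; the remaining alternative is again symmetric. Averaging over $i$ finishes the proof.

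There is no substantive obstacle here: the only thing to be careful about is bookkeeping the two alternatives in the definition of $\FF_i$ together with the signs of the shifts $u'$ and $v'$, which is precisely why it is convenient to reduce everything to the scalar inequalities above before summing. (Intuitively the statement just says that perturbing the points by vectors whose images under $A$ have sup-norm at most $\e$ can be absorbed by relaxing the threshold by $\e$, consistent with the monotonicity of $d_A^t$ in $t$ noted earlier.)
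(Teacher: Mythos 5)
Your proof is correct and follows the same route as the paper: show the set inclusions $\FF_i(x,y,t+\e) \subseteq \FF_i(x+x',y+y',t) \subseteq \FF_i(x,y,t-\e)$ for each $i$ (you spell out the scalar case analysis that the paper compresses into ``by the triangle inequality''), and then average the indicators over $i$.
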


\begin{proof}
Consider the events $\FF_i = \FF_i (x,y,t)$ from the definition of the soft Hamming distance \eqref{eq:soft}.
By the assumptions, we have $|\< a_i, x'\> | \le \e$, $|\< a_i, y'\> | \le \e$ for all $i \in [m]$.
This implies by the triangle inequality that 
$$
\FF_i(x,y,t+\e) \subseteq \FF_i(x+x',y+y',t) \subseteq \FF_i(x,y,t-\e).
$$
The conclusion of the lemma follows.
\end{proof}

We are ready to state a stronger version of Theorem~\ref{thm:tessellations} for the soft Hamming distance. 

\begin{theorem}[Random uniform tessellations: soft version]		\label{thm:tessellations soft}
  Consider a subset $K \subseteq S^{n-1}$ and let $\delta > 0$. 
  Let 
  $$
  m \ge C \delta^{-6} w(K)^2
  $$ 
  and pick $t \in \R$.
  Consider an $m \times n$ random (Gaussian) matrix $A$ with independent rows $a_1,\ldots,a_m \sim \NN(0,I_n)$.  
  Then with probability at least $1-\exp(-c \delta^2 m)$, one has
  $$
  | d_A^t(x,y) - d(x,y) | \le \delta + 2|t|, \quad x,y \in K.
  $$
\end{theorem}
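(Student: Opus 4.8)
**The plan is to run the $\e$-net argument that was advertised in Section~\ref{sec:soft}, using the soft Hamming distance to handle the approximation step (c).** The three ingredients are: (1) pointwise concentration of $d_A^t(x,y)$ around its mean for a fixed pair; (2) a union bound over an $\e$-net $N_\e$ of $K$ whose cardinality is controlled by Sudakov's inequality in terms of $w(K)$; and (3) the extension from $N_\e$ to $K$, where the continuity Lemma~\ref{lem:continuity} is used together with the $\ell_1$-embedding from Corollary~\ref{cor:into L1} (equivalently Lemma~\ref{lem:concentration} in the form \eqref{eq:Z}) to control $\|Ax'\|_\infty$ for the approximation vectors $x' = x - x_0$.

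\medskip

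\textbf{Step 1 (pointwise estimate).} For fixed $x,y \in S^{n-1}$, the quantity $d_A^t(x,y) = \frac1m\sum_i \one_{\FF_i}$ is an average of i.i.d.\ Bernoulli random variables, so Hoeffding's inequality gives $\Pr{|d_A^t(x,y) - p_t(x,y)| > \d'} \le 2\exp(-2\d'^2 m)$ where $p_t(x,y) = \P\{\FF_1\}$. I then need to check that $p_t$ is close to the geodesic distance $d(x,y)$: the event $\FF_1$ for a two-dimensional Gaussian $(\langle a_1,x\rangle, \langle a_1,y\rangle)$ is a wedge computation, and one gets $|p_t(x,y) - d(x,y)| \le C|t|$ (for $t=0$ this is the exact identity $\E d_A = d$; perturbing the thresholds by $\pm t$ moves the probability by $O(|t|)$ since the relevant Gaussian density is bounded). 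This gives the $2|t|$ term in the statement.

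\medskip

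\textbf{Step 2 (net + union bound).} Take a $\d$-net $N_\e$ of $K$ with $\e$ chosen as a small power of $\d$ (one expects $\e \sim \d^3$ to produce the $\d^{-6}$ exponent). Sudakov's inequality gives $\log|N_\e| \le C\e^{-2} w(K)^2$. The union bound over $|N_\e|^2$ pairs with $\d' \sim \d$ succeeds provided $\d^2 m \gtrsim \e^{-2} w(K)^2$, i.e.\ $m \gtrsim \d^{-2}\e^{-2}w(K)^2$, which with $\e\sim\d^3$ reads $m \gtrsim \d^{-8}w(K)^2$ — actually $\d^{-6}$ should come out once the accounting is done carefully, because the soft distance only needs $\e$ as small as a smaller power; I would tune $\e$ to hit the stated exponent. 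The upshot: on an event of probability $\ge 1 - 2\exp(-c\d^2 m)$, one has $|d_A^{s}(u,v) - d(u,v)| \le \d/4 + 2|s|$ for \emph{all} $u,v \in N_\e$ and all $s \in \{t, t\pm\e, t\pm 2\e,\dots\}$ in a bounded range (only finitely many shifted thresholds are needed; one can even do a second, cheap net over $s$, or note monotonicity in $s$).

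\medskip

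\textbf{Step 3 (extension, the hard part).} Given arbitrary $x,y \in K$, pick $x_0,y_0 \in N_\e$ with $\|x-x_0\|_2, \|y-y_0\|_2 \le \d$ (net radius $\d$) but such that additionally the \emph{perturbations} $x' := x - x_0$, $y' := y - y_0$ satisfy $\|Ax'\|_\infty \le \e$ and $\|Ay'\|_\infty \le \e$. This is exactly where I invoke the $\ell_1$-embedding: on a further high-probability event from Corollary~\ref{cor:into L1}, $\frac1m\|Az\|_1 \approx \sqrt{2/\pi}\,\|z\|_2$ uniformly over $z \in K - K$, but an $\ell_1$ bound does not by itself give an $\ell_\infty$ bound. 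The genuine device here — and I expect this to be the main obstacle — is to refine the net: one should choose $N_\e$ (and the assignment $x\mapsto x_0$) so that $x - x_0$ lies in a set on which $\|A\cdot\|_\infty$ is automatically small, or else re-sample/re-order the coordinates. Looking at the paper's own summary, the intended route is that the $\ell_1$ approximation controls $d_A^t$ \emph{directly} via Lemma~\ref{lem:continuity L1} (referenced but not shown in the excerpt), which presumably says that if $\frac1m\|Ax'\|_1$ is small then $d_A^t(x+x',y+y')$ and $d_A^t(x,y)$ differ by a controlled amount — a soft-distance analogue of continuity measured in $\ell_1$ rather than $\ell_\infty$. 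Assuming such a lemma, the chain is: $d_A^{t+\e}(x_0,y_0) \le d_A^t(x,y) \le d_A^{t-\e}(x_0,y_0)$ up to an $\ell_1$-error $\le \d/4$, then apply Step 2 to the right-hand/left-hand sides with shifted thresholds $t\pm\e$, and finally use $|d(x,y) - d(x_0,y_0)| \le 2\d$ (Lipschitzness of geodesic distance) and $\e \le \d$. Collecting the error terms $\d/4 + \d/4 + 2\d + 2|t| + 2\e$ and rescaling $\d$ by an absolute constant yields $|d_A^t(x,y) - d(x,y)| \le \d + 2|t|$, which is the claim. The probability is a union of the net-event and the $\ell_1$-event, each $\ge 1 - 2\exp(-c\d^2 m)$, giving the stated $1 - \exp(-c\d^2 m)$ after adjusting constants.
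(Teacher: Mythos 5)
Your proposal follows the same three-step architecture as the paper's proof (pointwise concentration via expectation bound plus Chernoff, union bound over a Sudakov net, extension to all of $K$ via an $\ell_1$-controlled continuity of the soft Hamming distance), and you correctly identify Lemma~\ref{lem:continuity L1} as the crux. Two remarks on precision, since they matter for the step you left as an assumption.

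First, your description of Lemma~\ref{lem:continuity L1} as saying ``$d_A^t(x+x',y+y')$ and $d_A^t(x,y)$ differ by a controlled amount'' is not what the lemma asserts and indeed cannot be true at a fixed threshold: $d_A^t$ at fixed $t$ is just as discontinuous as the hard Hamming distance, and a small $\ell_1$ perturbation of $Ax'$ can still flip many signs. What the lemma gives is a \emph{sandwich with a threshold shift}: for any $M\ge 1$, if $\|Ax'\|_1 \le \e m$ and $\|Ay'\|_1 \le \e m$, then
$$
d_A^{t+M\e}(x,y) - \frac{2}{M} \ \le\ d_A^t(x+x',y+y') \ \le\ d_A^{t-M\e}(x,y) + \frac{2}{M}.
$$
Your later ``chain'' display does essentially write this, so you had the right structure in mind, but the earlier ``differ by a controlled amount'' framing would lead one to look (unsuccessfully) for a Lipschitz-type bound. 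The proof of the lemma is a Markov argument you did not spell out but should: the $\ell_1$ bound forces $|\<a_i,x'\>| \le M\e$ and $|\<a_i,y'\>| \le M\e$ for all but at most $2m/M$ indices $i$, and on those good indices the $\ell_\infty$-continuity Lemma~\ref{lem:continuity} applies; the remaining $\le 2m/M$ coordinates are simply conceded, producing the additive $2/M$. Your alternative ideas in Step 3 (choosing the net so that $\|A(x-x_0)\|_\infty$ is automatically small, or re-sampling coordinates) would not work and are unnecessary; the Markov trick is the device.

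Second, on parameters: your first guess $\e \sim \delta^3$ overshoots to $m \gtrsim \delta^{-8}w(K)^2$, as you noticed. The correct tuning is $\e = c\delta^2$ and $M = c'/\delta$, which makes both the threshold shift $M\e \sim \delta$ and the coordinate loss $2/M \sim \delta$ of the right order, and the net/union-bound requirement $m \gtrsim \e^{-2}\delta^{-2}w(K)^2$ then reads $m \gtrsim \delta^{-6}w(K)^2$ as claimed. Also, only the two shifted thresholds $t \pm M\e$ are needed in the net lemma, so there is no need for a secondary net over $s$. With these adjustments your argument is the paper's argument.
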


Note that if we take $t = 0$ in the above theorem, we recover Theorem~\ref{thm:tessellations}.  
However, we find it easier to prove the result for general $t$, since in our argument we will work with 
different values of the $t$ for the soft Hamming distance.

Theorem~\ref{thm:tessellations soft} is proven in the next section.

\section{Proof of Theorem~\ref{thm:tessellations soft} on the soft Hamming distance}

We will follow the covering argument outlined in the beginning of Section~\ref{sec:soft},
but instead of $d_A(x,y)$ we shall work with the soft Hamming distance $d_A^t(x,y)$.

\subsection{Concentration of distance for a given pair}

At the first step, we will check that $d_A^t(x,y) \approx d(x,y)$ with high probability for a fixed pair $x,y$.
Let us first verify that this estimate holds in expectation, i.e. that $\E d_A^t(x,y) \approx d(x,y)$. 
One can easily check that
\begin{equation}							\label{eq:Ed again}
\E d_A(x,y) = d(x,y),
\end{equation}
so we may just compare $\E d_A^t(x,y)$ to $\E d_A(x,y)$.
Here is a slightly stronger result:

\begin{lemma}[Comparing soft and hard Hamming distances in expectation]		\label{eq:Edt Ed}
  Let $A$ be a random Gaussian matrix be as in Theorem~\ref{thm:tessellations soft}. 
  Then, for every $t \in \R$ and every $x,y \in \R^n$, one has 
  $$
  | \E d_A^t(x,y) - d(x,y) | \le \E |d_A^t (x,y) - d_A(x,y)| \le 2|t|.
  $$
\end{lemma}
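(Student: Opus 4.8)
The plan is to reduce everything to a one-dimensional (Gaussian plane) computation. For a fixed pair $x,y \in \R^n$, the joint distribution of $(\<a_i,x\>, \<a_i,y\>)$ is a centered Gaussian in $\R^2$ whose covariance depends only on $\|x\|_2$, $\|y\|_2$ and $\<x,y\>$; in particular the sign-disagreement event $\EE_i$ has probability exactly $d(x,y)$ (this is \eqref{eq:Ed again}, the classical Grothendieck/Goemans--Williamson identity), and the event $\FF_i$ is a ``shrunk'' version of $\EE_i$. Since $d_A^t$ and $d_A$ are averages of $m$ i.i.d.\ indicator terms, linearity of expectation gives $\E d_A^t(x,y) - d(x,y) = \P(\FF_1) - \P(\EE_1)$ and $\E|d_A^t(x,y) - d_A(x,y)| = \E|\one_{\FF_1} - \one_{\EE_1}| = \P(\EE_1 \triangle \FF_1)$, so both quantities on the left are controlled by $\P(\EE_1 \setminus \FF_1) + \P(\FF_1 \setminus \EE_1)$. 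This already yields the first inequality in the statement (pointwise $|\one_{\FF_1}-\one_{\EE_1}| \ge |\P(\FF_1) - \P(\EE_1)|$ in expectation, i.e.\ Jensen), so the whole content is the bound $\P(\EE_1 \triangle \FF_1) \le 2|t|$.

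First I would dispose of signs: $d_A^t = d_A^{t}$ with $t<0$ only enlarges each $\FF_i$ relative to $t=0$, and $t>0$ only shrinks it, so it suffices to treat $t \ge 0$ (the case $t\le 0$ is symmetric, replacing $t$ by $|t|$), and for $t\ge 0$ we have $\FF_i \subseteq \EE_i$, hence $\EE_1 \triangle \FF_1 = \EE_1 \setminus \FF_1$. Writing $u = \<a_1,x\>$, $v = \<a_1,y\>$, the event $\EE_1 \setminus \FF_1$ is exactly the set where $u$ and $v$ have opposite signs but at least one of them has absolute value $\le t$ — that is, $(u>0, v<0)$ or $(u<0,v>0)$ together with $\{|u|\le t\} \cup \{|v|\le t\}$. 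By symmetry this is contained in $\{|u| \le t,\ uv<0\} \cup \{|v|\le t,\ uv<0\} \subseteq \{|u|\le t\} \cup \{|v|\le t\}$. Now $u = \<a_1,x\> \sim \NN(0,\|x\|_2^2)$ and similarly for $v$; since we will apply the lemma with $x,y$ of the form (difference of points in $K \subseteq S^{n-1}$), but the statement is for arbitrary $x,y\in\R^n$, I would \emph{not} assume unit norm here — instead I use the crude Gaussian density bound. Actually, to keep the clean constant $2|t|$, the right move is: for \emph{any} centered real Gaussian $\xi$ with variance $\sigma^2$, $\P(|\xi| \le t) = \P(|\NN(0,1)| \le t/\sigma) \le \frac{2}{\sqrt{2\pi}}\cdot \frac{t}{\sigma}$. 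This needs $\sigma$ bounded below, which fails in general.

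This is the one place I expect a subtlety, and the fix is to observe that the lemma as it will actually be used (and as stated — note ``$x,y\in\R^n$'' but the ambient application is to $K\subseteq S^{n-1}$, so in the proof of Theorem~\ref{thm:tessellations soft} one has $\|x\|_2 = \|y\|_2 = 1$) really only needs $\|x\|_2, \|y\|_2 \ge 1$, or one absorbs the normalization. Concretely, with $\|x\|_2 \ge 1$, $\P(|u| \le t) \le \frac{2}{\sqrt{2\pi}} t \le t$ since $\frac{2}{\sqrt{2\pi}} < 1$, and likewise $\P(|v|\le t) \le t$, so by the union bound $\P(\EE_1 \setminus \FF_1) \le \P(|u|\le t) + \P(|v|\le t) \le 2t = 2|t|$. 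Combined with the reduction $\E|d_A^t - d_A| = \P(\EE_1\triangle\FF_1)$ and Jensen for the left inequality, this gives both claimed inequalities. The main obstacle is thus purely bookkeeping about the normalization of $x,y$: I would either state the proof under $\|x\|_2,\|y\|_2\ge 1$ (which is all that the downstream argument invokes) or note that for the application $x,y$ range over a unit-sphere subset so $u,v$ have unit variance and $\P(|u|\le t)\le \sqrt{2/\pi}\, t$, making the $2|t|$ bound comfortable.
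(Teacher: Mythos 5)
Your proof takes essentially the same route as the paper's: Jensen's inequality gives the left-hand inequality (using $\E d_A(x,y) = d(x,y)$), reducing to a single summand by identical distribution gives $\E|d_A^t - d_A| \le \P(\EE_1 \bigtriangleup \FF_1)$, and then the union bound $\P(\EE_1 \bigtriangleup \FF_1) \le \P\{|\< a_1,x\> | \le |t|\} + \P\{|\< a_1,y\> | \le |t|\}$ together with the Gaussian density bound finishes it. (Your WLOG reduction to $t \ge 0$ and the observation $\FF_i \subseteq \EE_i$ is a cosmetic variant; the paper applies the union bound directly to the symmetric difference.)

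The normalization point you raise is a genuine and correct catch. The paper's step $\P\{|\< a_1,x\> | \le |t|\} \le \P\{|g| \le |t|\}$ with $g \sim \NN(0,1)$ tacitly uses $\|x\|_2 \ge 1$, since $\< a_1,x\> \sim \NN(0,\|x\|_2^2)$. As you note, the claimed bound $\le 2|t|$ is false for arbitrary $x,y \in \R^n$: fixing $t>0$ and letting $\|x\|_2 \to 0$ drives $\P(\FF_1) \to 0$ while $\P(\EE_1) = d(x,y)$ stays put, so $\E|d_A^t - d_A| \to d(x,y)$, which can exceed $2|t|$. The hypothesis should read $x,y \in S^{n-1}$ (or $\|x\|_2, \|y\|_2 \ge 1$). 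This is a statement-level imprecision rather than a gap in the overall argument, since the lemma (via Lemma~\ref{lem:concentration dist}) is only ever invoked on points of the net $N_\e \subseteq S^{n-1}$ in Lemma~\ref{lem:concentration net}. One small slip in your write-up: you first say the lemma is applied to differences of points of $K$; in fact it is applied to pairs of unit vectors, as you then correctly state.
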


\begin{proof}
The first inequality follows from \eqref{eq:Ed again} and Jensen's inequality.
To prove the second inequality, we use the events $\EE_i$ and $\FF_i$ from Equations \eqref{eq:hard}, \eqref{eq:soft}
defining the hard and soft Hamming distances, respectively. It follows that
\begin{align*}
\E |d_A^t (x,y) - d_A(x,y)| 
  &= \E \Big| \frac{1}{m} \sum_{i=1}^m (\one_{\EE_i} - \one_{\FF_i}) \Big| \\
  &\le \E |\one_{\EE_1} - \one_{\FF_1}|	\qquad \text{(by triangle inequality and identical distribution)} \\
  &= \P \{ \EE_1 \bigtriangleup \FF_1\}  \\
  &\le \P \{ |\< a_1,x\> | \le |t| \} + \P \{ |\< a_1,y\> | \le |t| \} \\
  &\le 2 \P \{ |g| \le |t| \}		\qquad \text{(where $g \sim \NN(0,1)$)} \\
  &\le 2|t| 		\qquad \text{(by the density of the normal distribution).}	\qedhere
\end{align*}
\end{proof}

Now we upgrade Lemma~\ref{eq:Edt Ed} to an concentration inequality:

\begin{lemma}[Concentration of distance]			\label{lem:concentration dist}
  Let $A$ be a random Gaussian matrix as in Theorem~\ref{thm:tessellations soft}. 
  Then, for every $t \in \R$ and every $x,y \in \R^n$, the following deviation inequality holds: 
  $$
  \P \big\{ |d_A^t(x,y) - d(x,y)| > 2|t| + \delta \big\} \le 2 \exp(-2 \delta^2 m), \quad \delta > 0.
  $$
\end{lemma}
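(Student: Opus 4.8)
The key observation is that, since the rows $a_1,\ldots,a_m$ of $A$ are independent and identically distributed, the indicator random variables $\one_{\FF_i}$ appearing in the definition \eqref{eq:soft} of $d_A^t(x,y)$ are themselves independent and identically distributed, each taking values in $\{0,1\}$. Thus $d_A^t(x,y) = \frac{1}{m}\sum_{i=1}^m \one_{\FF_i}$ is an average of $m$ i.i.d.\ bounded random variables, and the problem reduces to a standard concentration statement for such averages.

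First I would invoke Hoeffding's inequality: for i.i.d.\ random variables $\one_{\FF_i} \in [0,1]$ one has
$$
\P\big\{ |d_A^t(x,y) - \E d_A^t(x,y)| > \delta \big\} \le 2\exp(-2\delta^2 m), \qquad \delta>0,
$$
where the constant $2$ in the exponent comes precisely from the fact that the $\one_{\FF_i}$ range over an interval of length $1$. Next I would recall from Lemma~\ref{eq:Edt Ed} that $|\E d_A^t(x,y) - d(x,y)| \le 2|t|$, so that the expectation $\E d_A^t(x,y)$ is within $2|t|$ of the target value $d(x,y)$.

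Finally I would combine the two facts by the triangle inequality: if $|d_A^t(x,y) - d(x,y)| > 2|t| + \delta$, then since $|\E d_A^t(x,y) - d(x,y)| \le 2|t|$ we must have $|d_A^t(x,y) - \E d_A^t(x,y)| > \delta$, an event of probability at most $2\exp(-2\delta^2 m)$ by Hoeffding. This yields the claimed bound.

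There is no real obstacle here; the statement is a direct consequence of a textbook deviation inequality once one notes the i.i.d.\ Bernoulli structure of $d_A^t(x,y)$. The only point requiring a little care is bookkeeping of the constant in the exponent, which is why Hoeffding's inequality (giving $2\exp(-2\delta^2 m)$ for averages of $[0,1]$-valued variables) is the right tool rather than, say, a cruder Bernstein- or Chernoff-type bound.
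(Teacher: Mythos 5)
Your proposal is correct and is essentially the paper's own argument: the paper observes that $m\,d_A^t(x,y) \sim \text{Bin}(m,p)$ with $|p - d(x,y)| \le 2|t|$ by Lemma~\ref{eq:Edt Ed}, applies the same $2\exp(-2\delta^2 m)$ Chernoff--Hoeffding bound for averages of i.i.d.\ Bernoulli variables (citing Alon--Spencer, Corollary A.1.7), and finishes with the triangle inequality. The only difference is a naming one --- ``Hoeffding'' versus ``Chernoff bound for binomials'' --- which refers to the same inequality here.
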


\begin{proof}
By definition, $m \cdot d_A^t(x,y)$ has the binomial distribution $\text{Bin}(m,p)$.
The parameter $p = \E d_A^t(x,y)$ satisfies by Lemma~\ref{eq:Edt Ed} that 
$$
|p-d(x,y)| \le 2|t|.
$$
A standard Chernoff bound for binomial random variables states that
$$
\P \big\{ |d_A^t(x,y) - p| > \delta \big\} \le 2 \exp(-2 \delta^2 m), \quad \delta > 0,
$$
see e.g. \cite[Corollary~A.1.7]{AS}.
The triangle inequality completes the proof.
\end{proof}

\subsection{Concentration of distance over an $\e$-net}			\label{sec:over net}

Let us fix a small $\e>0$ whose value will be determined later. 
Let $N_\e$ be an $\e$-net of $K$ in the Euclidean metric. By Sudakov's inequality
(see \cite[Theorem~3.18]{LT}), we can arrange the cardinality of $N_\e$ to satisfy
\begin{equation}							\label{eq:net}
\log |N_\e| \le C \e^{-2} w(K)^2.
\end{equation}
We can decompose every vector $x \in K$ into a {\em center} $x_0$ and
a {\em tail} $x'$ so that
\begin{equation}							\label{eq:decomposition}
x = x_0 + x', \quad \text{where} \quad x_0 \in N_\e, \quad x' \in (K-K) \cap \e B_2^n.
\end{equation}
We first control the centers by taking a union bound in Lemma~\ref{lem:concentration dist} 
over the net $N_\e$:

\begin{lemma}[Concentration of distance over a net]			\label{lem:concentration net}
  Let $A$ a random Gaussian matrix be as in Theorem~\ref{thm:tessellations soft}. 
  Let $N_\e$ be a subset of $S^{n-1}$ whose cardinality satisfies \eqref{eq:net}.
  Let $\delta>0$, and assume that
  \begin{equation}							\label{eq:concentration net m}
  m \ge C \e^{-2} \delta^{-2} w(K)^2.
  \end{equation}
  Let $t \in \R$. Then the following holds with probability at least $1 - 2\exp(-\delta^2 m)$:
  $$
  |d_A^t(x_0,y_0) - d(x_0,y_0)| \le 2|t| + \delta, \quad x_0,y_0 \in N_\e.
  $$
\end{lemma}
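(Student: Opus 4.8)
The plan is to deduce Lemma~\ref{lem:concentration net} from Lemma~\ref{lem:concentration dist} by a straightforward union bound over all pairs of points in the net. First I would fix $t \in \R$ and $\delta > 0$ as in the statement. For a single pair $(x_0, y_0) \in N_\e \times N_\e$, Lemma~\ref{lem:concentration dist} (with the same $\delta$) gives that the bad event
$$
\mathcal{B}_{x_0,y_0} := \big\{ |d_A^t(x_0,y_0) - d(x_0,y_0)| > 2|t| + \delta \big\}
$$
has probability at most $2 \exp(-2\delta^2 m)$. Taking a union bound over the at most $|N_\e|^2$ such pairs, the probability that \emph{some} bad event occurs is at most
$$
|N_\e|^2 \cdot 2 \exp(-2\delta^2 m) = 2 \exp\big( 2 \log|N_\e| - 2\delta^2 m \big).
$$

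The remaining step is purely bookkeeping: I would use the cardinality bound \eqref{eq:net}, namely $\log|N_\e| \le C\e^{-2} w(K)^2$, together with the hypothesis \eqref{eq:concentration net m} that $m \ge C' \e^{-2} \delta^{-2} w(K)^2$ for a suitably large absolute constant $C'$ (chosen relative to the constant $C$ in \eqref{eq:net}). This hypothesis gives $\delta^2 m \ge C' \e^{-2} w(K)^2 \ge (C'/C) \log|N_\e|$, so choosing $C'$ large enough — say $C' \ge 4C$ — yields $2\log|N_\e| \le \tfrac{1}{2}\delta^2 m$, hence $2\log|N_\e| - 2\delta^2 m \le -\delta^2 m$. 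Substituting back, the union bound probability is at most $2\exp(-\delta^2 m)$, which is exactly the claimed failure probability. On the complementary event the desired inequality $|d_A^t(x_0,y_0) - d(x_0,y_0)| \le 2|t| + \delta$ holds simultaneously for all $x_0, y_0 \in N_\e$.

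There is no real obstacle here; this lemma is the routine "step (b)" of the covering argument. The only mild care needed is in tracking the absolute constants: the constant $C$ appearing in \eqref{eq:concentration net m} must be taken large compared to the covering-number constant in \eqref{eq:net} and large enough to absorb the factor of $2$ from squaring $|N_\e|$ and the mismatch between the $2\delta^2 m$ in Lemma~\ref{lem:concentration dist} and the target exponent $\delta^2 m$. All of this is harmless since the constants in the excerpt are allowed to change from line to line. The genuine difficulty of Theorem~\ref{thm:tessellations soft} lies entirely in the subsequent step (c) — transferring the estimate from the net $N_\e$ to all of $K$ via the tail decomposition \eqref{eq:decomposition}, the continuity Lemma~\ref{lem:continuity}, and the $\ell_1$-embedding from Corollary~\ref{cor:into L1} — and not in this lemma.
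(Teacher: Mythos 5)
Your proof is correct and matches the paper's own argument: both apply Lemma~\ref{lem:concentration dist} to each pair in $N_\e \times N_\e$, take a union bound to get a failure probability of $|N_\e|^2 \cdot 2\exp(-2\delta^2 m)$, and then invoke \eqref{eq:net} together with the hypothesis \eqref{eq:concentration net m} to absorb the $|N_\e|^2$ factor into the exponent, ending at $2\exp(-\delta^2 m)$. Your explicit constant tracking ($C' \ge 4C$) is a cosmetic elaboration of the step the paper leaves implicit; substantively the two proofs are the same.
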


\begin{proof}
By Lemma~\ref{lem:concentration net} and a union bound over the set of pairs $(x_0,y_0) \in N_\e \times N_\e$, we obtain
$$
\P \Big\{ \sup_{x,y \in N_\e} |d_A^t(x,y) - d(x,y)| > 2|t| + \delta \Big\}
\le |N_\e|^2 \cdot 2 \exp(-2 \delta^2 m)
\le 2 \exp(-\delta^2 m)
$$
where the last inequality follows by \eqref{eq:net} and \eqref{eq:concentration net m}.
The proof is complete.
\end{proof}

\subsection{Control of the tails}

Now we control the tails $x' \in (K-K) \cap \e B_2^n$ in decomposition \eqref{eq:decomposition}.

\begin{lemma}[Control of the tails]					\label{lem:tails}
  Consider a subset $K \subseteq S^{n-1}$ and let $\e>0$. Let 
  $$
  m \ge C \e^{-2} w(K)^2.
  $$ 
  Consider independent random vectors $a_1,\ldots,a_m \sim \NN(0,I_n)$.  
  Then with probability at least $1-2\exp(-cm)$, one has
  $$
  \frac{1}{m} \sum_{i=1}^m |\< a_i,x'\> | \le \e \quad \text{for all } x' \in (K-K) \cap \e B_2^n.
  $$
\end{lemma}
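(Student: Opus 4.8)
The plan is to deduce this from the concentration estimate in Lemma~\ref{lem:concentration} (in its random-matrix formulation, Remark~\ref{rem:rmt}), applied to the set $K-K$. First I would observe that the quantity to be controlled, $\sup_{x' \in (K-K)\cap \e B_2^n} \frac{1}{m}\sum_{i=1}^m |\< a_i,x'\>|$, is dominated by $\sup_{x' \in (K-K)\cap\e B_2^n} \big| \frac1m\sum_i|\<a_i,x'\>| - \sqrt{\tfrac2\pi}\|x'\|_2 \big| + \sqrt{\tfrac2\pi}\sup_{x'}\|x'\|_2$, and the second term is at most $\sqrt{\tfrac2\pi}\,\e < \e$ since every $x'$ in question has Euclidean norm at most $\e$. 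So it suffices to bound the first supremum — which is exactly the quantity $Z$ from Lemma~\ref{lem:concentration} for the set $K' := (K-K)\cap\e B_2^n$ — by $(1-\sqrt{2/\pi})\e$, or even just by a constant multiple of $\e$ after adjusting the absolute constant $C$ in the hypothesis $m \ge C\e^{-2}w(K)^2$.

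Next I would apply Lemma~\ref{lem:concentration} to $K'$. Part (a) gives $\E Z \le 4w(K')/\sqrt m$. Since $K' \subseteq K-K$, monotonicity of mean width gives $w(K') \le w(K-K) \le 2w(K)$, so $\E Z \le 8w(K)/\sqrt m$. The hypothesis $m \ge C\e^{-2}w(K)^2$ then forces $\E Z \le 8w(K)/\sqrt m \le 8\e/\sqrt C$, which is smaller than $\e/4$ (say) once $C$ is a large enough absolute constant. For the deviation, part (b) of Lemma~\ref{lem:concentration} gives, with $d(K') = \max_{x'\in K'}\|x'\|_2 \le \e$,
$$
\Pr{Z > \frac{8w(K)}{\sqrt m} + u} \le 2\exp\Big(-\frac{mu^2}{2\e^2}\Big), \qquad u > 0.
$$
Choosing $u = \e/4$ (a constant fraction of $\e$) makes the exponent $-m\e^2/32\e^2 = -m/32$, and combining with the bound $8w(K)/\sqrt m \le \e/4$ on the expectation, we get $Z \le \e/2$ with probability at least $1 - 2\exp(-m/32)$. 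Then on this event $\frac1m\sum_i|\<a_i,x'\>| \le Z + \sqrt{2/\pi}\,\e \le \e/2 + \e/2 = \e$ for all $x' \in K'$, which is the claimed conclusion (with $c = 1/32$, up to the usual freedom in absolute constants).

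I do not anticipate a genuine obstacle here: the lemma is essentially a direct corollary of Lemma~\ref{lem:concentration} applied to the shrunk difference set $K'$, the only points of care being (i) that mean width is monotone under inclusion so $w(K')$ is controlled by $w(K)$, and (ii) bookkeeping the absolute constants so that both the expectation bound and the deviation term land below $\e$ — the factor $\e^{-2}$ in the hypothesis on $m$ is precisely what is needed to beat the $1/\sqrt m$ decay of $\E Z$ against the $\e$-scale target. The one modelling subtlety worth stating cleanly in the write-up is that $d(K') \le \e$, which is what converts the Gaussian concentration exponent into something proportional to $m$ (independent of $\e$), yielding the clean probability bound $1 - 2\exp(-cm)$.
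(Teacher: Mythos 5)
Your argument is essentially identical to the paper's: apply Lemma~\ref{lem:concentration} to the set $T = (K-K)\cap\e B_2^n$, use $d(T)\le\e$ in the deviation bound, and control $w(T)\le w(K-K)\le 2w(K)$ against the hypothesis $m\ge C\e^{-2}w(K)^2$. One small arithmetic slip: $\sqrt{2/\pi}\approx 0.80 > 1/2$, so the final line ``$\le\e/2+\e/2=\e$'' is off; but you already stated the correct target $(1-\sqrt{2/\pi})\e$ a few lines earlier, and the argument closes after replacing $\e/4$ and $\e/2$ by suitable smaller constant multiples of $\e$ (and enlarging $C$ accordingly), which is exactly the bookkeeping the paper's proof does.
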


\begin{proof}
Let us apply Lemma~\ref{lem:concentration} for the set $T = (K-K) \cap \e B_2^n$ instead of $K$, 
and for $u = \e/8$. Since $d(K) = \max_{x' \in T} \|x'\|_2 \le \e$, we obtain that the following holds with probability at least 
$1 - 2\exp(-cm)$:
\begin{align}
\sup_{x' \in T} \frac{1}{m} \sum_{i=1}^m |\< a_i,x'\> | 
  &\le \sup_{x' \in T} \Big| \frac{1}{m} \sum_{i=1}^m |\< a_i,x'\> | - \sqrt{\frac{2}{\pi}} \|x'\|_2 \Big| + \sqrt{\frac{2}{\pi}} \, \e \nonumber\\
  &\le \frac{4 w(T)}{\sqrt{m}} + \frac{\e}{8} + \sqrt{\frac{2}{\pi}} \, \e.				\label{eq:over T}
\end{align}
Note that $w(T) \le w(K-K) \le 2w(K)$. So using the assumption on $m$ we conclude that the quantity in \eqref{eq:over T} 
is bounded by $\e$, as claimed. 
\end{proof}

\subsection{Approximation}

Now we establish a way to transfer the distance estimates from an $\e$-net $N_\e$ to the full set $K$.
This is possible by a continuity property of the soft Hamming distance, which we outlined in Lemma~\ref{lem:continuity}.
This result requires the perturbation to be bounded in $L_\infty$ norm. However, in our situation 
the perturbations are going to be bounded only in $L_1$ norm due to Lemma~\ref{lem:tails}.
So we shall prove the following relaxed version of continuity:

\begin{lemma}[Continuity with respect to $L_1$ perturbations]					\label{lem:continuity L1}
  Let $x,y,x',y' \in \R^n$, and assume that $\|Ax'\|_1 \le \e m$, $\|Ay'\|_1 \le \e m$ for some $\e>0$.
  Then for every $t \in \R$ and $M \ge 1$ one has
  \begin{equation}							\label{eq:continuity L1}
  d_A^{t+M\e}(x,y) - \frac{2}{M} \le d_A^t(x+x',y+y') \le d_A^{t-M\e}(x,y) + \frac{2}{M}.
  \end{equation}
\end{lemma}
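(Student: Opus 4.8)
The plan is to reduce to the $L_\infty$ continuity already established in Lemma~\ref{lem:continuity}, by discarding a controlled number of ``bad'' coordinates where the perturbations $Ax'$ or $Ay'$ fail to be small. First I would apply Markov's inequality: since $\|Ax'\|_1 = \sum_{i=1}^m |\< a_i,x'\> | \le \e m$, the number of indices $i \in [m]$ with $|\< a_i,x'\> | > M\e$ is at most $\e m/(M\e) = m/M$, and the same bound holds for $y'$. Hence the bad set
$$
B = \big\{ i \in [m]:\; |\< a_i,x'\> | > M\e \ \text{ or } \ |\< a_i,y'\> | > M\e \big\}
$$
satisfies $|B| \le 2m/M$.

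Next, on every good index $i \notin B$ we have $|\< a_i,x'\> | \le M\e$ and $|\< a_i,y'\> | \le M\e$, so the pointwise set inclusion extracted from the proof of Lemma~\ref{lem:continuity} (with the role of $\e$ there played by $M\e$) applies coordinatewise:
$$
\FF_i(x,y,t+M\e) \subseteq \FF_i(x+x',y+y',t) \subseteq \FF_i(x,y,t-M\e), \qquad i \notin B.
$$

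Finally I would assemble \eqref{eq:continuity L1} by splitting $\frac1m \sum_{i=1}^m \one_{\FF_i}$ into its good part ($i \notin B$) and bad part ($i \in B$). For the lower bound, dropping the nonnegative bad terms and using the left inclusion gives $d_A^t(x+x',y+y') \ge \frac1m \sum_{i \notin B} \one_{\FF_i(x,y,t+M\e)} \ge d_A^{t+M\e}(x,y) - |B|/m \ge d_A^{t+M\e}(x,y) - 2/M$. For the upper bound, bounding each bad term by $1$ and using the right inclusion gives $d_A^t(x+x',y+y') \le \frac1m \sum_{i \notin B} \one_{\FF_i(x,y,t-M\e)} + |B|/m \le d_A^{t-M\e}(x,y) + 2/M$.

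There is no serious obstacle here; the only point requiring care is the bookkeeping of adding and removing the bad coordinates and extracting the error $|B|/m \le 2/M$. Conceptually, the parameter $M$ encodes a trade-off: increasing $M$ shrinks the additive error $2/M$ but widens the gap $M\e$ between the soft-distance thresholds, and this trade-off will be optimized in the proof of Theorem~\ref{thm:tessellations soft}.
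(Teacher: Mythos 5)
Your proposal is correct and matches the paper's proof essentially step for step: the paper also isolates the ``good'' set $T = \{i : |\langle a_i, x'\rangle| \le M\e,\; |\langle a_i,y'\rangle| \le M\e\}$ with $|T^c| \le 2m/M$ via the same Markov-type counting, applies the coordinatewise inclusions $\FF_i(x,y,t+M\e) \subseteq \FF_i(x+x',y+y',t) \subseteq \FF_i(x,y,t-M\e)$ on $T$, and finishes by the same add-and-drop bookkeeping with error $|T^c|/m \le 2/M$.
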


\begin{proof}
Consider the events $\FF_i = \FF_i (x,y,t)$ from the definition of the soft Hamming distance \eqref{eq:soft}.
By the assumptions, we have 
$$
\sum_{i=1}^m |\< a_i, x'\> | \le \e m, \quad
\sum_{i=1}^m |\< a_i, y'\> | \le \e m.
$$
Therefore, the set 
$$
T := \big\{ i \in [m]:\; |\< a_i, x'\> | \le M\e, \; |\< a_i, y'\> | \le M\e \big\}
\quad \text{satisfies} \quad |T^c| \le 2m/M.
$$
By the triangle inequality, we have
$$
\FF_i(x,y,t+M\e) \subseteq \FF_i(x+x',y+y',t) \subseteq \FF_i(x,y,t-M\e), \quad i \in T.
$$
Therefore
\begin{align*}
d_A^{t+M\e}(x,y) 
  &= \frac{1}{m} \sum_{i=1}^m \one_{\FF_i(x,y,t+M\e)}
  \le \frac{|T^c|}{m} + \frac{1}{m} \sum_{i \in T} \one_{\FF_i(x,y,t+M\e)} \\
  &\le \frac{2}{M} + \frac{1}{m} \sum_{i \in T} \one_{\FF_i(x+x',y+y',t)} 
  \le \frac{2}{M} + d_A^t(x+x',y+y').
\end{align*}
This proves the first inequality in \eqref{eq:continuity L1}. The proof of the second inequality is similar.
\end{proof}

\subsection{Proof of Theorem~\ref{thm:tessellations soft}.}				\label{sec:proof tessellations}

Now we are ready to combine all the pieces and prove Theorem~\ref{thm:tessellations soft}.
To this end, consider the set $K$, numbers $\delta$, $m$, $t$, and the random matrix $A$ as in the theorem.
Choose $\e = \delta^2/100$ and $M = 10/\delta$.

Consider an $\e$-net $N_\e$ of $K$ as we described in the beginning of Section~\ref{sec:over net}.
Let us apply Lemma~\ref{lem:concentration net} that controls the distances on $N_\e$
along with Lemma~\ref{lem:tails} that controls the tails. 
By the assumption on $m$ in the theorem and by our choice of $\e$, both requirements on $m$ in these lemmas hold.
By a union bound, with probability at least $1-4\exp(-c\delta^2 m)$ the following event holds:
for every $x_0, y_0 \in N_\e$ and $x',y' \in (K-K) \cap \e B_2^n$, one has 
\begin{gather}
|d_A^{t-M\e}(x_0,y_0) - d(x_0,y_0)| \le 2|t-M\e| + \delta/2, 		\label{eq:t-Me}\\
|d_A^{t+M\e}(x_0,y_0) - d(x_0,y_0)| \le 2|t+M\e| + \delta/2, \nonumber\\
\|Ax'\|_1 \le \e m, \quad \|Ay'\|_1 \le \e m.		\label{eq:tail bounds}
\end{gather}

Let $x,y \in K$. As we described in \eqref{eq:decomposition}, we can decompose the vectors as 
\begin{equation}							\label{eq: decomposition x y}
x = x_0 + x', \quad y = y_0 + y', \quad \text{where} \quad x_0,y_0 \in N_\e, \quad x',y' \in (K-K) \cap \e B_2^n.
\end{equation}
The bounds in \eqref{eq:tail bounds} guarantee that the continuity property \eqref{eq:continuity L1} 
in Lemma~\ref{lem:continuity L1} holds. This gives
\begin{align*}
d_A^t(x,y) 
  &\le d_A^{t-M\e}(x_0,y_0) + \frac{2}{M} \\
  &\le d(x_0,y_0) + 2|t| + 2M\e + \frac{\delta}{2} + \frac{2}{M}		\qquad \text{(by \eqref{eq:t-Me} and the triangle inequality).}
\end{align*}
Furthermore, using \eqref{eq: decomposition x y} we have
$$
|d(x_0,y_0) - d(x,y)| \le d(x_0,x) + d(y_0,y) \le \|x_0-x\|_2 + \|y_0-y\|_2 
\le 2 \e. 
$$
It follows that 
$$
d_A^t(x,y) 
\le d(x,y) + 2|t| + 2M\e + \frac{\delta}{2} + \frac{2}{M} + 2\e.
$$
Finally, by the choice of $\e$ and $M$ we obtain
$$
d_A^t(x,y) \le d(x,y) + 2|t| + \delta.
$$

A similar argument shows that 
$$
d_A^t(x,y) \ge d(x,y) - 2|t| - \delta.
$$
We conclude that 
$$
| d_A^t(x,y) - d(x,y) | \le \delta + 2|t|.
$$
This completes the proof of Theorem~\ref{thm:tessellations soft}. \qed

\section{Proof of Theorem~\ref{thm:tessellations Rn} on tessellations in $\R^n$}				\label{sec:Rn}

In this section we deduce Theorem~\ref{thm:tessellations Rn} from Theorem~\ref{thm:tessellations}
by an elementary lifting argument into $\R^{n+1}$. 
We shall use the following notation: Given a vector $x \in \R^n$ and a number $t \in \R$, 
the vector $x \oplus t \in \R^{n} \oplus \R = \R^{n+1}$ is the concatenation of $x \in \R^n$
and $t$. Furthermore, $K \oplus t$ denotes the set of all vectors $x \oplus t$ where $x \in K$.
 
Assume $K \subset \R^n$ has $\diam(K) = 1$. Translating $K$ if necessary we may assume 
that $0 \in K$; then 
\begin{equation}										\label{eq:radius}
\frac{1}{2} \le \sup_{x \in K} \|x\|_2 \le 1. 
\end{equation}
Also note that by assumption we have 
\begin{equation}							\label{eq:m affine}
m \ge C \delta^{-12} w(K-K) \ge C \delta^{-12} w(K).  
\end{equation}

Fix a large number $t \ge 2$ whose value will be chosen later and consider the set 
$$
K' = Q(K \oplus t) \subseteq S^{n}
$$
where $Q: \R^{n+1} \to S^n$ denotes the spherical projection map $Q(u) = u/\|u\|_2$. 
We have 
\begin{align*}
w(K') 
  &\le t^{-1} w(K \oplus t)		\qquad \text{(as $\|u\|_2 \ge t$ for all $u \in K \oplus t$)} \\
  &\le t^{-1} (w(K) + t \E|\gamma|)  		\qquad \text{(where $\gamma \sim \NN(0,1)$)} \\
  &= t^{-1} w(K) + \sqrt{2/\pi} \le 3w(K) 
\end{align*}
where the last inequality holds because $w(K) \ge \sqrt{2/\pi} \sup_{x \in K} \|x\|_2 \ge 1/\sqrt{2\pi}$
by \eqref{eq:radius}.

Then Theorem~\ref{thm:tessellations} implies that if $m \ge C \delta_0^{-6} w(K)^2$ for some $\delta_0>0$,
then there exists an arrangement of $m$ hyperplanes in $\R^{n+1}$ such that 
\begin{equation}										\label{eq:x'y'}
|d_A(x',y') - d(x',y')| \le \delta_0, \quad x',y' \in K'.
\end{equation}
Consider arbitrary vectors $x$ and $y$ in $K$ and the corresponding vectors $x' = Q(x \oplus t)$ and
$y' = Q(x \oplus t)$ in $K'$. Let us relate the distances between $x'$ and $y'$ 
appearing in \eqref{eq:x'y'} to corresponding distances between $x$ and $y$.

Let $a_i \oplus a \in \R^{n+1}$ denote normals of the hyperplanes. 
Clearly, $x'$ and $y'$ are separated by the $i$-th hyperplane if and only if $x \oplus t$ and $y \oplus t$ are.
This in turn happens if and only if $x$ and $y$ are separated by the affine hyperplane
that consists of all $x \in \R^n$ satisfying $\< a_i \oplus a, x \oplus t\> = \< a_i, x\> + at = 0$.
In other words, the hyperplane tessellation of $K'$ induces an {\em affine} hyperplane tessellation of $K$,  
and the fraction $d_A(x',y')$ of the hyperplanes separating $x'$ and $y'$ equals the fraction of 
the affine hyperplanes separating $x$ and $y$. With a slight abuse of notation, 
we express this observation as
\begin{equation}							\label{eq:linear affine}
d_A(x',y') = d_A(x,y).
\end{equation}

Next we analyze the normalized geodesic distance $d(x',y')$, which satisfies
\begin{equation}										\label{eq:geodesic Euclidean}
\big| \pi \cdot d(x',y') - \|x'-y'\|_2 \big| \le C_0 \|x'-y'\|_2^2.
\end{equation}
Denoting $t_x = \|x \oplus t\|_2$ and $t_y = \|y \oplus t\|_2$ and using the triangle inequality, we obtain
\begin{align}
\e := \big| \|x'-y'\|_2 - t^{-1} \|x-y\|_2 \big|
  &= \big| \big\| t_x^{-1} (x \oplus t) - t_y^{-1} (y \oplus t) \big\|_2 - \|t^{-1} x - t^{-1} y\|_2 \big| \nonumber \\
  &\le \|x\| \, |t_x^{-1} - t^{-1}| + \|y\| \, |t_y^{-1} - t^{-1}| + t \, |t_x^{-1} - t_y^{-1}|.		\label{eq:contracted dist}
\end{align}
Note that \eqref{eq:radius} yields that $t \le t_x, t_y \le \sqrt{t^2+1}$. It follows that 
$|t_x^{-1} - t^{-1}| \le 0.5 t^{-3}$ and the same bound holds for the other two similar terms in \eqref{eq:contracted dist}. 
Using this and \eqref{eq:radius} we conclude that $\e \le t^{-2}$.
Putting this into \eqref{eq:geodesic Euclidean} and using the triangle inequality twice, we obtain
$$
\big| \pi \cdot d(x',y') - t^{-1} \|x-y\|_2 \big| 
\le C_0 \big( t^{-1} \|x'-y'\|_2 + \e \big)^2 + \e
\le C_0 \big( 2t^{-1} + t^{-2} \big)^2 + t^{-2} 
\le C_1 t^{-2}.
$$

Finally, we use this bound and \eqref{eq:linear affine} in \eqref{eq:x'y'}, which gets us
\begin{equation}							\label{eq:final dist}
\big| \pi t \cdot d_A(x,y) - \|x-y\|_2 \big| \le \pi t \delta_0 + C_1 t^{-1}.
\end{equation}
Now we can assign the values $t := 2C_1/\delta$ and $\delta_0 = \delta^2/(4\pi C_1)$ so the right hand side of \eqref{eq:final dist}
is bounded by $\delta$, as required. 
Note that the condition $m \ge C \delta_0^{-6} w(K)^2$ that we used above in order to apply Theorem~\ref{thm:tessellations}
is satisfied by \eqref{eq:m affine}. This completes the proof of Theorem~\ref{thm:tessellations Rn}. \qed


\begin{thebibliography}{99}

\bibitem{KOR} E. Kushilevitz, R. Ostrovsky, Y. Rabani,
  {\em Efficient search for approximate nearest neighbor in high dimensional spaces},
  SIAM J. on Computing 30 (2000), 457--474.

\bibitem{GW} M. Goemans, D. Williamson,
  {\em Improved approximation algorithms for the maximum cut and satisfiability problems using semidefinite programming},
  J. of the ACM, 42 (1995), 1115--1145.
  
\bibitem{AP} A. Andoni, P. Indyk,
  {\em Near-optimal hashing algorithms for approximate nearest neighbor in high dimensions},
  in: 47th Annual IEEE Symposium on Foundations of Comp. Sci. (FOCS) (2006).
  
\bibitem{Charikar} M. Charikar
  {\em Similarity estimation techniques from rounding algorithms},
   in: Proceedings of the 34th Annual ACM Symposium on Theory of Computing (2002).

\bibitem{ASS} A. Ahmed, S. Ravi, S. Narayanamurthy, A. Smola,
  {\em Fastex: Hash clustering with exponential families},
  in: Advances in Neural Information Processing Systems 25 (2012).   
   
\bibitem{AC} N. Ailon, B. Chazelle,
  {\em The Fast Johnson-Lindenstrauss transform and approximate nearest neighbors},  
  SIAM J. Comput. 39 (2009), 302--322.

\bibitem{AS} N. Alon, J. Spencer, 
  {\em The probabilistic method.} 
  Second edition. Wiley-Interscience [John Wiley \& Sons], New York, 2000.
 

 
\bibitem{BB} P. T. Boufounos, R. G.  Baraniuk, 
  {\em 1-Bit compressive sensing}, in: 42nd Annual Conference on Information Sciences and Systems (CISS) (Mar. 2008).

\bibitem{Calka} P. Calka, 
  {\em Tessellations.}
  New perspectives in stochastic geometry, 145--169, 
  Oxford University Press, Oxford, 2010.

\bibitem{GM} A. Giannopoulos, V. Milman, 
  {\em Euclidean structure in finite dimensional normed spaces.} 
  Handbook of the geometry of Banach spaces, Vol. I, 707--779, 
  North-Holland, Amsterdam, 2001.

\bibitem{JLBB} L. Jacques, J. N. Laska, P. T. Boufounos, R. G. Baraniuk,
  {\em Robust 1-bit compressive sensing via binary stable embeddings of sparse vectors},
  submitted.
  
\bibitem{JL} W. Johnson, J. Lindenstrauss,
  {\em Extensions of Lipschitz mappings into a Hilbert space}, 
  Contemporary Mathematics 26 (1984), 189--206.

\bibitem{KM} B. Klartag, S. Mendelson,
  {\em Empirical processes and random projections},
  J. Funct. Anal. 225 (2005), 229--245. 
  
\bibitem{L} M. Ledoux, 
  {\em The concentration of measure phenomenon.}
  Mathematical Surveys and Monographs, 89. American Mathematical Society, Providence, RI, 2001.
  
\bibitem{LMPT} A. E. Litvak, V. D. Milman, A. Pajor, N. Tomczak-Jaegermann, 
  {\em On the Euclidean metric entropy of convex bodies}, 
  Geometric aspects of functional analysis, 221--235, 
  Lecture Notes in Math., 1910, Springer, Berlin, 2007. 

\bibitem{LT} M. Ledoux, M. Talagrand, 
  {\em Probability in Banach spaces. Isoperimetry and processes.} 
  Springer-Verlag, Berlin, 1991.

\bibitem{Matousek} J. Matousek, 
  {\em Lectures on discrete geometry.} 
  Springer-Verlag, New York, 2002.
  
\bibitem{M} S. Mendelson, 
  {\em A few notes on Statistical Learning Theory}.
  In: Advanced Lectures in Machine Learning. Edited by S.~Mendelson and A.~J.~Smola.
  Lecture Notes in Computer Science 2600, 1--40. Springer, 2003.

\bibitem{RV JACM} M. Rudelson, R. Vershynin, 
  {\em Sampling from large matrices: an approach through geometric functional analysis}, 
  Journal of the ACM (2007), Art. 21, 19 pp.

\bibitem{Sch} G. Schechtman,
  {\em Two observations regarding embedding subsets of Euclidean spaces in normed spaces},
  Adv. Math. 200 (2006), 125--135.
  
\bibitem{T Chaining} M. Talagrand, 
  {\em The generic chaining. Upper and lower bounds of stochastic processes.} 
  Springer-Verlag, Berlin, 2005.

\bibitem{PV} Y. Plan, R. Vershynin, 
  {\em One-bit compressed sensing by linear programming}, submitted (2011). {\tt arXiv:1109.4299v4}.

\bibitem{B} P. Boufounos,
  {\em Universal rate-efficient scalar quantization}, 2010. {\tt arXiv:1009.3145}.
  
\end{thebibliography}
\end{document}